\title{Anosov geodesic flows on surfaces}
\author{Hao-Tong Yan \\ \scriptsize The Ohio State University}
\newtheorem{theorem}{Theorem}[section]
\newtheorem{lemma}[theorem]{Lemma}
\newtheorem{proposition}[theorem]{Proposition}
\newtheorem{corollary}[theorem]{Corollary}
\theoremstyle{definition}
\newtheorem{definition}[theorem]{Definition}
\newtheorem{remark}[theorem]{Remark}
\theoremstyle{remark}
\renewcommand{\a}{\alpha}
\renewcommand{\b}{\beta}
\newcommand{\g}{\gamma}
\renewcommand{\d}{\delta}
\renewcommand\epsilon\varepsilon
\newcommand{\e}{\epsilon}
\renewcommand{\k}{\kappa}
\newcommand{\s}{\sigma}
\newcommand{\vp}{\varphi}
\newcommand{\maps}{\longmapsto}
\renewcommand{\leq}{\leqslant}
\renewcommand{\geq}{\geqslant}
\renewcommand{\sb}{\subseteq}
\newcommand{\cd}{\cdot}
\newcommand{\abs}[1]{\left\lvert #1 \right\rvert}
\newcommand{\magn}[1]{\left\lVert #1 \right\rVert}
\newcommand{\dd}[2]{\frac{d #1}{d #2}}
\newcommand{\pd}[2]{\frac{\partial #1}{\partial #2}}
\newcommand{\bb}[1]{\mathbb{#1}}
\newcommand{\f}[2]{\frac{#1}{#2}}
\newcommand{\brak}[1]{\left\langle #1\right\rangle}
\newcommand{\set}[1]{\left\{#1\right\}}
\renewcommand{\bar}[1]{\overline{#1}}
\begin{document}
\maketitle

\section{Introduction}
This paper is an exposition of the major results of P. Eberlein's paper, \emph{When is a geodesic flow of Anosov type? I} \cite{Eberlein}, in the special case when the manifold $M$ is a surface. We follow Eberlein's coverage closely, adding details when helpful, and taking advantage of simplifications given by the dimension two case. The objective is to give readers a more tractable introduction to the important topic of Anosov geodesic flows, which highlights key concepts and arguments without worrying about generalizations to arbitrary dimension. As such, this paper is suited for a beginning graduate or advanced undergraduate student who is familiar with basic Riemannian geometry, such as that covered by the first half of \cite{doCarmo}.

The main arc of this paper is divided into three parts. Section 2 highlights some basic concepts and results from Riemannian geometry. For more in-depth coverage, refer to \cite{doCarmo}. Section 3 details useful intermediary results when the manifold $M$ has no conjugate points. The work in these two sections culminate in the statement and proof of the main result in Section 4, which defines an Anosov flow and specifies conditions on Jacobi fields and curvature that are equivalent to the geodesic flow being of Anosov type.

The author would like to thank Andrey Gogolev (The Ohio State University) for advising this project and Jingyin Huang (The Ohio State University) for providing helpful feedback.
\section{Preliminaries}
We start with basic definitions and results. While our main theorem assumes $M$ is a complete $C^\infty$ Riemannian manifold of dimension two, the results in this section are dimension agnostic; hence for now we may assume $M$ is a complete $C^\infty$ Riemannian manifold of dimension $n \geq 2$ with Riemannian metric $\brak{,}$. In all cases, $M$ is equipped with the Levi-Civita connection. Let $TM$ and $SM$ denote the full tangent bundle and unit tangent bundle of $M$, respectively, and let $\pi$ be the projection map onto $M$ in either case. For any vector $v \in TM$, let $\g_v$ be the unique geodesic in $M$ such that $\g_v'(0) = v$, which is guaranteed by completeness of $M$.

A \emph{complete flow} $t \to \vp^t: N \to N$ on a $C^\infty$ manifold $N$ is a homeomorphism of $(\bb R, +)$ into the group of homeomorphisms of $N$. We say the flow is $C^k$ differentiable, $0\leq k\leq \infty$, if the map $(t, n) \to \vp^t(n): \bb R\times N\to N$ is $C^k$ differentiable.

\begin{definition}\label{GeodesicFlow}
	Let $t\in\bb R$ and let $v\in TM$. We define the \emph{geodesic flow} $g^t: TM \to TM$ on $TM$ as
\[
	g^tv = \g_v'(t) \;,
\]
	which is the velocity of $\g_v$ at time $t$.
\end{definition}
	
	The geodesic flow is a complete $C^\infty$ flow on $TM$, and also in $SM$, since $g^t$ leaves $SM$ invariant for all $t\in\bb R$. Let $Y$ denote the vector field in both $TM$ and $SM$ induced by the geodesic flow. In other words, for any real-valued function $f: TM\to\bb R$ (or $f: SM\to\bb R$), we have
\[
	Y(v)f = \left. \dd {}t f(g^tv)\right\rvert_{t=0} \; .
\]
	Working with this definition, we have for any $s\in\bb R$,
\[
	\left(dg^s Y(v)\right) f = \left.\dd {}t f(g^sg^tv)\right\rvert_{t=0} = \left.\dd {}t f(g^t g^sv)\right\rvert_{t=0} = Y(g^sv) f \; ,
\]
	so $dg^s Y(v) = Y(g^s v)$.

\begin{definition}
	A vector field $J$ along a geodesic $\g$ of $M$ is called a \emph{Jacobi field} if
\[
	J''(t) + R(\g'(t), J(t))\g'(t) = 0 \; ,
\]
	where the ticks over $J$ denote covariant differentiation along $\g$ and $R$ is the curvature tensor.
\end{definition}
	
	By passing to local coordinate systems along points of $\g$, we see that this equation yields a system of $n$ linear ODEs of the second order, so $J$ is uniquely determined by the values $J(0)$ and $J'(0)$. Therefore, we have $2n$ linearly independent Jacobi fields along $\g$; let $J(\g)$ denote their span. Let $J_0(\g)\sb J(\g)$ be the $(2n-2)$-dimensional subspace of \emph{perpendicular} Jacobi fields $J$ along $\g$, that is, where $\brak{J(t),\g'(t)} = 0$ for all $t\in\bb R$. The \emph{tangential} Jacobi fields $J$ along $\g$ can be expressed in the form $J(t) = (\a + \b t)\g'(t)$, where $\a,\b \in \bb R$.

\begin{definition}
	For $a\neq b$, we say that $\g(a)$ and $\g(b)$ are \emph{conjugate} if there exists a nonzero Jacobi field $J$ along $\g$ such that $J(a) = J(b) = 0$. We say that $M$ has \emph{no conjugate points} if there exists no pair of conjugate points along any geodesic in $M$, or in other words, for any geodesic $\g$ in $M$ and any nonzero Jacobi field $J$ along $\g$, $J(t) = 0$ for at most one number $t\in\bb R$.
\end{definition}
Let $J$ be a Jacobi field along a geodesic $\g$. Recall that if $a \neq b$ and $\g(a)$ and $\g(b)$ are not conjugate, then $J$ is uniquely determined by $J(a)$ and $J(b)$. Further, if $J$ is perpendicular to $\g$ at any two nonconjugate points, then it is perpendicular everywhere.

Let $N \sb M$ be a proper $C^\infty$ Riemannian submanifold of $M$, and $\g$ be a unit speed geodesic of $M$ such that $\g'(0)$ is perpendicular to $T_{\g(0)}N$. A Jacobi field $J$ along $\g$ is an \emph{$N$-Jacobi field} if $J$ is perpendicular to $\g$, $J(0)\in T_{\g(0)}N$, and $J'(0) + S_{\g'(0)}(J(0)) \in (T_{\g(0)}N)^\perp$, where $S_{\g'(0)}$ is the linear operator given by the second fundamental form on $N$. Equivalently, $J$ is an $N$-Jacobi field if it can be written as
\[
	J(s) = \pd rt (s, 0) \; ,
\]
	where $r: (-\infty, \infty) \times (-\e, \e) \to M$ is a $C^\infty$ variation of the form
\[
	r(s,t) = \exp_{\pi Z(t)} (sZ(t)) = (\pi\circ g^s)(Z(t))\; ,
\]
	where $Z(t)$ is a $C^\infty$ curve in the unit normal bundle of $N$ such that $Z(0) = \g'(0)$. Note that $\partial r/\partial t$ denotes shorthand for $dr (\partial/\partial t)$.

If $N$ is a point, then a Jacobi field $J$ is an $N$-Jacobi field if and only if $J(0) = 0$ and $J$ is perpendicular to $\g$. If $N$ is a geodesic $\s$ and $J(0)\neq 0$, then a Jacobi field $J$ is an $N$-Jacobi field if and only if $J(0)$ is tangent to $\s$, $J$ is perpendicular to $\g$, and $\brak{J(0), J'(0)} = 0$.

\begin{definition}
	For an arbitrary proper submanifold $N\sb M$ and a perpendicular unit speed geodesic $\g$, the point $\g(a)$, $a\neq 0$, is a \emph{focal point} of $N$ along $\g$ if there exists a nontrivial $N$-Jacobi field $J$ along $\g$ such that $J(a) = 0$. $M$ is said to have \emph{no focal points} if no geodesic $N = \s$ has focal points along any unit speed geodesic perpendicular to $\s$.
\end{definition}

\begin{proposition}
	$M$ has no focal points if and only if the following condition is satisfied: For any unit speed geodesic $\g$ in $M$ and any Jacobi field $J$ on $\g$ such that $J(0) = 0$ and $J'(0)\neq 0$, we have
\[
	\dd{}t \brak{J(t), J(t)}  > 0
\]
	for all $t > 0$.
\end{proposition}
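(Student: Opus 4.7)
The plan is to decompose $J = J_T + J_\perp$ into tangential and perpendicular components along $\g$. Because $J(0) = 0$, we have $J_T(t) = \b t\,\g'(t)$ and $J_\perp(0) = 0$, so
\[
\frac{d}{dt}\brak{J, J} = 2\b^2 t + 2\brak{J_\perp(t), J_\perp'(t)}.
\]
In dimension two the perpendicular space along $\g$ is one-dimensional, so writing $J_\perp(t) = y(t)E(t)$ for a unit parallel normal field $E$ along $\g$ converts the Jacobi equation into the scalar ODE $y'' + K(\g(t))y = 0$, where $K$ is the Gaussian curvature.

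For the forward direction, I first note that a one-dimensional geodesic $\s$ is totally geodesic, so its shape operator vanishes; consequently, an $N$-Jacobi field along $\g$ (with $N = \s$ perpendicular to $\g$ at a basepoint $\g(t_0)$) is characterized simply by $J \perp \g$ together with $J'(t_0) = 0$. The key step is to apply the no focal points hypothesis at an \emph{arbitrary} basepoint $\g(t_0)$ with $t_0 > 0$: let $z_{t_0}(t)$ be the scalar coefficient of the perpendicular Jacobi field satisfying $z_{t_0}(t_0) = 1$ and $z_{t_0}'(t_0) = 0$. This is the $N$-Jacobi field for a geodesic $\s$ perpendicular to $\g$ at $\g(t_0)$, so by hypothesis $z_{t_0}$ does not vanish on $\bb R$; by continuity $z_{t_0} > 0$ everywhere. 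Since $y$ and $z_{t_0}$ solve the same linear second-order ODE, their Wronskian $W = y z_{t_0}' - y' z_{t_0}$ is constant in $t$, and evaluating at $t = t_0$ and $t = 0$ gives $y'(t_0) = y'(0)\, z_{t_0}(0)$. Taking WLOG $y'(0) > 0$, we conclude $y'(t_0) > 0$ for every $t_0 > 0$; combined with $y(0) = 0$, we obtain $y > 0$ and hence $yy' > 0$ on $(0, \infty)$. The tangential piece $\b^2 t$ is $\geq 0$, and vanishes only when $\b = 0$, in which case $J'(0) \neq 0$ forces $y'(0) \neq 0$; so in every case the total derivative is strictly positive.

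For the reverse direction, assume the Jacobi condition and suppose for contradiction that there is a nontrivial $N$-Jacobi field $Y$ along some $\g$ (with $N = \s$ perpendicular to $\g$ at $\g(0)$) satisfying $Y(a) = 0$ for some $a \neq 0$. By the characterization above, $Y \perp \g$, $Y'(0) = 0$, and $Y(0) \neq 0$. WLOG $a > 0$ (the case $a < 0$ is symmetric); reparameterize by $\bar\g(t) = \g(a - t)$ and $\bar J(t) = Y(a - t)$, obtaining a Jacobi field along $\bar\g$ with $\bar J(0) = 0$ and $\bar J'(0) = -Y'(a) \neq 0$ (nonzero by uniqueness of Jacobi fields applied at $t = a$). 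Applying the Jacobi hypothesis to $\bar J$ at the positive time $t = a$ yields
\[
\left.\frac{d}{dt}\brak{\bar J, \bar J}\right\rvert_{t=a} = 2\brak{Y(0), -Y'(0)} = 0,
\]
contradicting the strict positivity.

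The main obstacle is the forward direction, specifically the realization that the no focal points hypothesis must be invoked at a \emph{variable} basepoint $\g(t_0)$ rather than only at $\g(0)$; the Wronskian then serves to transport the positivity of $z_{t_0}(0)$ into positivity of $y'(t_0)$ for every $t_0 > 0$, which is what yields strict monotonicity of $\brak{J, J}$.
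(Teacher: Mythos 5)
Your reverse direction is essentially the paper's: reparametrize the $N$-Jacobi field by $t \mapsto a - t$ and evaluate the derivative of its norm at the terminal time to get zero, contradicting strict positivity.

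Your forward direction, however, is genuinely different from the paper's and worth a comparison. The paper argues by contradiction: it assumes $\langle J(t), J(t)\rangle$ has a critical point at some smallest $t_0 > 0$, reverses the geodesic, and exhibits $\g(t_0)$ as a focal point of the geodesic through $\g(t_0)$ tangent to $J(t_0)$. Your argument is direct and constructive: for every $t_0 > 0$ you produce the normalized solution $z_{t_0}$ with $z_{t_0}(t_0)=1$, $z_{t_0}'(t_0)=0$, observe it is an $N$-Jacobi field based at $\g(t_0)$ and hence (by the no-focal-point hypothesis applied at that variable basepoint) never vanishes, and then use the constancy of the Wronskian $W(y,z_{t_0})$ to transport the sign: $y'(t_0) = y'(0)\,z_{t_0}(0) > 0$. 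This gives strict monotonicity of $y$ without contradiction, and the computation is quite clean. What you buy is explicitness and transparency about \emph{where} the hypothesis enters (at every $t_0$); what you give up is dimension generality. The paper's Section 2 is stated to be dimension-agnostic, and the paper's proof works for all $n\geq 2$ using the characterization $\langle J(t_0), J'(t_0)\rangle = 0$ of an $N$-Jacobi field at a geodesic. Your characterization ``$J'(t_0)=0$'' (and the reduction to the scalar ODE $y''+Ky=0$) is valid only when the normal bundle along $\g$ is one-dimensional, i.e.\ in dimension two. Since the rest of the paper restricts to surfaces, this costs nothing in context, but if you wanted the proposition in the generality the paper claims for Section~2, you would need to either revert to the paper's contradiction argument or replace the scalar Wronskian with a statement about the Wronskian of matrix solutions of the matrix Jacobi equation. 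One tiny stylistic point: you should make explicit why $z_{t_0}(0) > 0$ rather than merely $\neq 0$ (continuity plus $z_{t_0}(t_0) = 1 > 0$), which you do gesture at but it is the pivot of the whole sign argument.
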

\begin{proof}
	We follow the proof from \cite{OSullivan}. Since $J(0) = 0$, we can write $J(t) = J_1(t) + \a t\g'(t)$, where $J_1$ is a perpendicular Jacobi field with $J_1(0) = 0$ and  $\a\in\bb R$. Then
\[
	\dd {}t \brak{J(t), J(t)} =  \dd {}t \brak{J_1(t), J_1(t)} + 2\a^2 t \; ,
\]
	so it suffices to show the result for nonzero perpendicular Jacobi fields vanishing at $t=0$.
	
	Let $J$ be such a Jacobi field, suppose $M$ has no focal points, and suppose there exists $t_0 > 0$ such that $\dd {}t \brak{J(t), J(t)}\rvert_{t=t_0} = 0$. Then $\brak{J'(t_0), J(t_0)} = 0$ and by picking the smallest $t_0$ with this property, we can ensure that $J(t_0) \neq 0$. Let $\s$ be the geodesic with $\s'(0) = J(t_0)$, and let $L = J(t_0 - t)$, so $L$ is Jacobi field along the geodesic $\g(t_0-t)$. Thus $\brak{L'(0), L(0)} = 0$, and by the above discussion on focal points, $L$ is an $N$-Jacobi field with $N = \s$. But $L(t_0) = 0$, so $\g(t_0)$ is a focal point of $\s$ along $\g(t_0-t)$, a contradiction. Since $\dd {} t \brak{J(t),J(t)} > 0$ for sufficiently small $t>0$, this contradiction shows that $\dd {} t \brak{J(t),J(t)} > 0$ for all $t>0$.
	
	Conversely, suppose $M$ has focal points. Then there exist geodesics $\g$ and $\s$ with $\g(0)$ in $\s$ and $\g'(0)$ perpendicular to $\s$ and a non-trivial $N$-Jacobi field $J$ with $N = \s$ such that $J(t_0) = 0$ for some $t_0 > 0$. Then $L(t) = J(t_0-t)$ is a non-trivial perpendicular Jacobi field with $L(0) = 0$. But $\dd {}t \brak{L,L}\rvert_{t=t_0} = -2 \brak{J'(0), J(0)} = 0$ since $J$ is an $N$-Jacobi field.
\end{proof}

This immediately yields the following result, which will be useful later.
\begin{corollary}\label{HaoNoFocal}
	Suppose $M$ has no focal points and let $\g$ be a geodesic. Then for $t \geq s\geq 0$ and any Jacobi field $J$ along $\g$ with $J(0) = 0$ and $J'(0) \neq 0$, we have
\[
	\magn{J(t)} \geq \magn{J(s)} \; .
\]
\end{corollary}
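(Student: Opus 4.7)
The plan is to apply the proposition directly: it guarantees that under the no-focal-points hypothesis, the scalar function $f(t) = \langle J(t), J(t)\rangle = \|J(t)\|^2$ has strictly positive derivative on the open interval $(0, \infty)$. Hence $f$ is strictly increasing on $(0, \infty)$, and by continuity it is non-decreasing on the closed half-line $[0, \infty)$, so for any $t \geq s \geq 0$ we obtain $\|J(t)\|^2 \geq \|J(s)\|^2$.

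From there, I would just take square roots. Since $\|J(t)\|$ and $\|J(s)\|$ are both nonnegative and the square root function is monotone increasing on $[0, \infty)$, the inequality $\|J(t)\|^2 \geq \|J(s)\|^2$ immediately yields $\|J(t)\| \geq \|J(s)\|$, which is what the corollary asserts.

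There is essentially no obstacle here: the corollary is a one-line consequence of the preceding proposition, the only minor subtlety being that the proposition gives strict positivity of the derivative only for $t > 0$, not at $t = 0$ itself. This is why the conclusion is phrased as a weak inequality $\|J(t)\| \geq \|J(s)\|$ rather than a strict one, and why we must handle the boundary case $s = 0$ (where $J(0) = 0$) by continuity of $\|J\|$ rather than by direct differentiation. No further use of the no-focal-points hypothesis, Jacobi equation, or curvature information is required beyond what the proposition already packages.
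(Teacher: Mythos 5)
Your proposal is correct and is exactly the argument the paper has in mind; the paper simply states that the corollary ``immediately'' follows from the preceding proposition and omits the details. Your handling of the boundary case $s=0$ via continuity of $\|J\|$ is the right way to pass from strict positivity of the derivative on $(0,\infty)$ to the weak inequality on $[0,\infty)$.
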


We want to define a natural isomorphism between $T_v(TM)$ and $J(\g_v)$ for any complete $C^\infty$ Riemannian manifold $M$ and any $v\in TM$. For each $v\in TM$, $d\pi: T_v(TM) \to T_{\pi v} M$ is linear and the kernel of $d\pi$ is the $n$-dimensional \emph{vertical subspace} of $T_v(TM)$. We define a \emph{connection map} $K: T(TM) \to TM$ such that for each $v\in TM$, $K: T_v(TM) \to T_{\pi v}M$ is linear. The kernel of $K$ is the $n$-dimensional \emph{horizontal subspace} of $T_v(TM)$, and the intersection of the horizontal and vertical subspaces is the zero vector.

Given a vector $\xi \in T_v(TM)$, let $Z: (-\e, \e) \to TM$ be a $C^\infty$ curve with initial velocity $\xi$, and let $\a = \pi\circ Z: (-\e, \e) \to M$. We define $K(\xi) = Z'(0) \in T_{\pi v} M$, where $Z'(0)$ is the covariant derivative of $Z$ along $\a$ evaluated at $t=0$. By passing to a local coordinate system about $v$, we can show that $K(\xi)$ does not depend on the chosen curve $Z$. Note that for $\xi \in T_v(SM)$, we can choose the curve $Z$ to lie in $SM$.

We define a natural metric on $TM$ called the \emph{Sasaki metric}, which has the property that the horizontal and vertical subspaces of $T_v(TM)$ are orthogonal. Given vectors $\xi, \eta \in T_v(TM)$, we define
\[
	\brak{\xi, \eta}_v = \brak{d\pi\xi, d\pi\eta}_{\pi v} + \brak{K\xi, K\eta}_{\pi v} \; .
\]
We can also define this metric on $SM$ in the same way, since $d\pi|_{T(SM)}$ and $K|_{T(SM)}$ both map $T(SM)$ to $SM$. From now on, this will be the metric we use whenever dealing with $TM$ or $SM$.

\begin{proposition}\label{P1.5}
	Let $p: N\to M$ be a surjective local isometry of complete Riemannian manifolds. Then
\begin{enumerate}
	\item $P = dp: TN \to TM$ is a surjective local isometry carrying $SN$ onto $SM$,
	\item $\magn{dg^t\xi} = \magn{dg^t dP(\xi)}$ for any $t\in\bb R$ and any $\xi\in T(TN)$, where $g^t$ denotes the geodesic flow on both $TN$ and $TM$, and
	\item $dPY(v) = Y(Pv)$ for any $v\in TN$, where $Y$ denotes the vector field defined by $g^t$ in both $TN$ and $TM$.
\end{enumerate}
\end{proposition}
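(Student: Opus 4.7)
The plan is to exploit two elementary properties of a local isometry $p: N \to M$: it preserves the Levi-Civita connection, and it carries geodesics to geodesics. Because $p$ is a surjective local isometry between complete manifolds, a standard theorem guarantees that $p$ is actually a Riemannian covering, so $P = dp$ is surjective; since each $dp|_{T_qN}$ is a linear isometry, $P$ preserves norms fiberwise and in particular carries $SN$ onto $SM$. The harder content of (1) is that $P$ is a local isometry for the Sasaki metric. I would decompose arbitrary $\xi,\eta \in T_v(TN)$ using $d\pi_N$ and $K_N$: the naturality identity $\pi_M\circ P = p\circ \pi_N$ differentiates to $d\pi_M\circ dP = dp\circ d\pi_N$, which handles the horizontal term since $dp$ is an isometry. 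For the vertical term, the key identity to check is $K_M\circ dP = dp\circ K_N$: given $\xi \in T_v(TN)$ represented by a curve $Z$ in $TN$ with footprint $\a = \pi_N\circ Z$, the curve $P\circ Z$ represents $dP(\xi)$ in $TM$ with footprint $p\circ \a$, and its covariant derivative along $p\circ\a$ equals $dp$ applied to the covariant derivative of $Z$ along $\a$, because $p$ locally preserves Christoffel symbols.

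For (2) and (3), the central observation is the intertwining identity
\[
P\circ g^t = g^t\circ P,
\]
which holds because local isometries take geodesics to geodesics: $p\circ \g_v$ is a geodesic in $M$ with initial velocity $Pv$, hence equals $\g_{Pv}$, so differentiating at time $t$ gives $P(g^tv) = g^t(Pv)$. Differentiating this identity in the $v$-direction yields $dP\circ dg^t = dg^t\circ dP$, and combining with part (1) gives
\[
\magn{dg^t\, dP(\xi)} = \magn{dP\, dg^t\xi} = \magn{dg^t\xi},
\]
which is (2). For (3), applying the intertwining to any test function $f: TM\to \bb R$ and differentiating in $t$ at $t=0$ gives $(dP\,Y(v))f = Y(v)(f\circ P) = Y(Pv)f$, so $dP\,Y(v) = Y(Pv)$.

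The main obstacle I anticipate is the verification that $K_M\circ dP = dp\circ K_N$; once this is established, every other claim reduces to a routine manipulation. This compatibility is not surprising philosophically, since both $K$ and the Sasaki metric are built functorially from the Levi-Civita connection and $p$ preserves that connection, but a careful check requires unwinding the coordinate-based definition of $K$ and confirming that the covariant derivative of $P\circ Z$ in $M$ is the image under $dp$ of the covariant derivative of $Z$ in $N$. With that identity in hand, (1) is immediate, the intertwining relation gives (3) directly, and (2) follows from (1) combined with the intertwining.
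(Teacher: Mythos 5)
Your proposal is correct and follows essentially the same route as the paper's proof: both arguments hinge on the same four compatibility relations ($\pi\circ P = p\circ\pi$, its differential, $K\circ dP = dp\circ K$, and $P\circ g^t = g^t\circ P$), with (2) and (3) then falling out of the intertwining identity exactly as you describe. Your extra remark about $p$ being a Riemannian covering is harmless but unnecessary — surjectivity of $P$ already follows from surjectivity of $p$ together with each $dp_{q'}$ being an isomorphism.
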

\begin{proof}
	Define the projection maps $\pi_1: TN\to N$, $\pi_2: TM\to M$, and the connection maps $K_1: T(TN)\to TN$ and $K_2: T(TM) \to TM$. We have
\begin{enumerate}
	\item[(i)] $p\circ \pi_1 = \pi_2\circ P$
	\item[(ii)] $P\circ d\pi_1 = d\pi_2\circ dp$
	\item[(iii)] $P\circ K_1 = K_2\circ dP$, and
	\item[(iv)] $P\circ g^t = g^t \circ P$ for any $t\in\bb R$,
\end{enumerate}
	which can be verified from the definitions and by passing to local coordinates. Relations (ii) and (iii) show that $P$ is a local isometry, surjectivity of $p$ guarantees surjectivity of $P$, and the fact that $P(SN) = SM$ follows from the fact that $p$ is a local isometry. Assertion (2) follows from (iv) and (1), since $\magn{dg^t dP(\xi)} = \magn{dP\circ dg^t(\xi)} = \magn{dg^t\xi}$. Assertion (3) follows from (iv) and working with the explicit definition of $Y(v)$ defined after Definition \ref{GeodesicFlow}.
\end{proof}

\begin{definition}\label{D1.6}
	For any $v\in TM$ and any $\xi\in T_v(TM)$, let $J_\xi$ be the unique Jacobi field on $\g_v$ such that $J_\xi(0) = d\pi\xi$ and $J_\xi'(0) = K\xi$.
\end{definition}

We recall Lemma 3.4 from \cite{doCarmo}, which is a symmetry result used to prove Gauss's Lemma.

\begin{lemma}
	Let $A\sb\bb R^2$ be a (possibly unbounded) open rectangle. Let $r(u,v): A\to M$ be differentiable. Then
\[
	\f D{\partial t} \pd rs (s,t) = \f D{\partial s} \pd rt (s,t) \; ,
\]
	where $D/\partial t$ denotes covariant differentiation with respect to $t$ along the curve $t \to r(s, t)$ for fixed $s$, and $D/\partial s$ denotes covariant differentiation with respect to $s$ along the curve $s \to r(s, t)$ for fixed $t$.
\end{lemma}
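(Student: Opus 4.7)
The plan is to reduce the identity to a local computation in coordinates, where the symmetry of mixed second partial derivatives of smooth real-valued functions (Schwarz's theorem) combined with the symmetry of the Christoffel symbols of the Levi-Civita connection will deliver the result. Since the conclusion is a pointwise statement, it suffices to verify the identity at an arbitrary $(s_0, t_0) \in A$, so I may restrict attention to a coordinate chart $(U, x^1, \ldots, x^n)$ about $r(s_0, t_0)$ and a small neighborhood of $(s_0, t_0)$ in $A$ whose image lies in $U$.

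In such a chart, write $r(s,t) = (x^1(s,t), \ldots, x^n(s,t))$ and let $\partial_i = \partial/\partial x^i$ denote the coordinate frame. Then
\[
    \pd rs = \sum_i \pd{x^i}{s}\, \partial_i \quad \text{and} \quad \pd rt = \sum_j \pd{x^j}{t}\, \partial_j \; .
\]
First, I would apply $D/\partial t$ to the first expression using the product rule for the connection along $t \mapsto r(s,t)$, which produces a term coming from differentiating the coefficients and a term coming from $\n_{\partial r/\partial t} \partial_i$ expanded in terms of Christoffel symbols:
\[
    \f D{\partial t}\pd rs = \sum_i \f{\partial^2 x^i}{\partial t\, \partial s}\, \partial_i + \sum_{i,j} \pd{x^i}{s}\pd{x^j}{t}\, \n_{\partial_j}\partial_i \; .
\]
A completely symmetric calculation yields
\[
    \f D{\partial s}\pd rt = \sum_j \f{\partial^2 x^j}{\partial s\, \partial t}\, \partial_j + \sum_{i,j} \pd{x^j}{t}\pd{x^i}{s}\, \n_{\partial_i}\partial_j \; .
\]

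To conclude, I would invoke two symmetries. First, Schwarz's theorem gives $\partial^2 x^i/\partial t\,\partial s = \partial^2 x^i/\partial s\,\partial t$, so after relabeling the summation index, the first terms on both sides agree. Second, the Levi-Civita connection is torsion-free, so $\n_{\partial_j}\partial_i - \n_{\partial_i}\partial_j = [\partial_j, \partial_i] = 0$, or equivalently $\Gamma^k_{ij} = \Gamma^k_{ji}$ in terms of Christoffel symbols; this makes the second summand on one side equal, term by term, to the second summand on the other. The two expressions therefore coincide at $(s_0, t_0)$, and since $(s_0, t_0)$ was arbitrary, the identity holds on all of $A$.

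I do not anticipate a serious obstacle: the argument is essentially a bookkeeping exercise. The only subtlety worth flagging is the need to make sure both sides of the equation are being interpreted as covariant derivatives along the correct parameter curves (a fixed $s$-curve on the left and a fixed $t$-curve on the right), since this is the reason the Christoffel symbol terms show up in the symmetric form $\partial_i,\partial_j$ rather than in some mixed configuration; once this is set up carefully, the torsion-free property of $\n$ finishes the proof.
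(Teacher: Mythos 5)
Your proof is correct and is exactly the standard argument: the paper itself gives no proof of this lemma, deferring to Lemma 3.4 of do Carmo, and do Carmo's proof is precisely your coordinate computation combining Schwarz's theorem with the symmetry $\Gamma^k_{ij} = \Gamma^k_{ji}$ of the Levi-Civita connection. The only point worth noting is that ``differentiable'' here follows do Carmo's convention of meaning $C^\infty$, so the equality of mixed partials you invoke is available.
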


Let $v\in SM$ and let $r(s,t) = \exp_{\pi Z(t)} (sZ(t)) = (\pi\circ g^s)(Z(t))$ be the variation of $\g_v$ defined in the discussion of focal points above, with the difference that now $Z$ is any curve in $TM$ with initial velocity $\xi\in T_v(TM)$. Then for any $s\in\bb R$, we claim that
\[
	J_\xi(s) = \pd rt (s, 0) \; .
\]
	It suffices to show that $J_\xi(0) = \pd rt(0,0)$ and $J_\xi'(0) = \f D{\partial s} \pd rt(0,0)$. By direct computation, we have $\pd rt(s,0) = d\pi \circ dg^s \xi$, so $\pd rt(0,0) = d\pi \xi = J_\xi(0)$. Next, notice that $\pd rs (s, t) = \g_{Z(t)}' (s) = g^s (Z(t))$, so the preceding lemma gives us that
\[
	\f D{\partial s} \pd rt(s, t) = \f D{\partial t} \pd rs (s, t) = \f D{\partial t}g^s (Z(t))\; ,
\]
	which gives us
\[
	\f D{\partial s} \pd rt(0, 0) = Z'(0) = K \xi = J_\xi'(0)\; ,
\]
	as desired.
	
	If $\xi\in T_v(SM)$, we may choose the curve $Z$ to lie in $SM$; the $u$-parameter curves of the variation are then unit speed geodesics of $M$. We claim that the map $s \to \brak{J_\xi (s), \g_v'(s)}$ is a constant function if and only if $\xi\in T_v(SM)$. Indeed, consider
\[
	\dd {}s \brak{J_\xi (s), \g_v'(s)} = \brak{J_\xi'(s), \g_v'(s)} + \brak{J_\xi(s), \f {D\g_v'}{ds} (s)} = \brak{J_\xi'(s), \g_v'(s)} 
\]
	since $\g_v$ is a geodesic. We have
\begin{align*}
	\brak{J_\xi'(s), \g_v'(s)} &= \brak{\f D{\partial s} \pd rt (s,0) , \pd rs (s,0)} \\
	&= \brak{\f D{\partial t} \pd rs (s,0) , \pd rs (s,0)} \\
	&= \f 12 \pd {}t \left.\brak{\pd rs (s,t) , \pd rs (s,t)}\right\rvert_{t=0} \\
	&= \f 12 \pd {}t \left.\brak{\g_{Z(t)}'(s) , \g_{Z(t)}'(s)}\right\rvert_{t=0} \\
	&= \f 12 \dd {}t \left.(\magn{Z(t)}^2)\right\rvert_{t=0} \; ,
\end{align*}
	which is zero if and only if we can choose $Z(t)$ to be a curve in $SM$, which is true if and only if $\xi\in T_v(SM)$. The following result is now straightforward to verify. For (5) we also need the result that $KY(v) = 0$ and $d\pi Y(v) = v$ for all $v\in TM$, which follows from the definitions.

\begin{proposition}\label{P1.7}
	Let $v\in TM$. Then
\begin{enumerate}
	\item $\xi\to J_\xi$ is a linear isomorphism of $T_v(TM)$ onto $J(\g_v)$,
	\item $J_\xi(t) = d\pi\circ dg^t(\xi)$ and $J_\xi'(t) = K\circ dg^t(\xi)$ for $t\in\bb R$,
	\item $\xi\in T_v(TM)$ lies in $T_v(SM)$ for $v\in SM$ if and only if $\brak{K\circ dg^t(\xi),g^t v} = \brak{J_\xi'(t), \g_v'(t)} = 0$ for all $t\in\bb R$, if and only if $t\to \brak{J_\xi(t), \g_v'(t)}$ is a constant function,
	\item $\brak{\xi, Y(v)} = 0$ for $v\in SM$ and $\xi\in T_v(SM)$ if and only if $\brak{J_\xi(t), \g_v'(t)} = 0$ for all $t\in \bb R$, where $Y$ is the flow vector field, and
	\item $\brak{dg^t\xi, Y(dg^t v)} = 0$ for all $t\in\bb R$, if $v\in SM$ and $\xi\in T_v(SM)$ satisfy $\brak{\xi, Y(v)} = 0$.
\end{enumerate}
\end{proposition}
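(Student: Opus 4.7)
The plan is to leverage the computations already performed before the proposition---the variation formula $J_\xi(s) = \partial r/\partial t(s,0)$, the symmetry lemma, and the computation relating $\brak{J_\xi'(s), \g_v'(s)}$ to $\dd{}{t}\magn{Z(t)}^2\big|_{t=0}$---and combine them with horizontal/vertical orthogonality in the Sasaki metric. For (1), I check linearity directly, then observe that $J_\xi \equiv 0$ forces $d\pi\xi = J_\xi(0) = 0$ and $K\xi = J_\xi'(0) = 0$, placing $\xi$ in the intersection of the horizontal and vertical subspaces, which is trivial; a dimension count ($2n$ on each side) upgrades injectivity to an isomorphism. For (2), the variation calculation already yields $J_\xi(s) = d\pi\circ dg^s\xi$; differentiating covariantly in $s$ and invoking the symmetry lemma a second time gives $J_\xi'(s) = \frac{D}{\partial t} g^s(Z(t))\big|_{t=0}$, which by definition of $K$ is $K(dg^s\xi)$, since $t\mapsto g^s(Z(t))$ has initial velocity $dg^s\xi$.

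Part (3) is the pivot. Because $\g_v'$ is parallel, $\dd{}{s}\brak{J_\xi(s), \g_v'(s)} = \brak{J_\xi'(s), \g_v'(s)}$, so constancy of $s \mapsto \brak{J_\xi(s), \g_v'(s)}$ is equivalent to $\brak{J_\xi'(s), \g_v'(s)} = \brak{K \circ dg^s\xi, g^s v}$ vanishing for all $s$. The preamble exhibits this quantity as $\tfrac{1}{2}\dd{}{t}\magn{Z(t)}^2\big|_{t=0}$, independent of $s$ and of the chosen representative $Z$; it vanishes precisely when $Z$ may be chosen to lie in $SM$, i.e.\ when $\xi \in T_v(SM)$. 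Part (4) then drops out: the Sasaki formula together with $d\pi Y(v) = v$ and $KY(v) = 0$ gives $\brak{\xi, Y(v)} = \brak{d\pi\xi, v} = \brak{J_\xi(0), \g_v'(0)}$, which by (3) is the common value of $\brak{J_\xi(t), \g_v'(t)}$ for every $t$ when $\xi \in T_v(SM)$; the equivalence to $\brak{\xi, Y(v)} = 0$ follows.

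For (5), I apply (4) at the point $g^t v \in SM$ to $dg^t\xi \in T_{g^t v}(SM)$ (using that $g^t$ preserves $SM$, hence $dg^t$ preserves $T(SM)$); the semigroup identities $J_{dg^t\xi}(s) = d\pi \circ dg^{s+t}\xi = J_\xi(s+t)$ and $\g_{g^t v}'(s) = \g_v'(s+t)$ reduce the condition to $\brak{J_\xi(u), \g_v'(u)} = 0$ for all $u$, which is the hypothesis by (4) applied to $v$ and $\xi$. The one delicate step is (3): one must be careful that $\tfrac{1}{2}\dd{}{t}\magn{Z(t)}^2\big|_{t=0}$ depends on $\xi$ alone (not on the particular $Z$ chosen), so that its vanishing for one representative implies the existence of a representative lying inside $SM$. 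Once that independence is settled, (4) and (5) are direct unpackings of Sasaki orthogonality and the cocycle identity $dg^{s+t} = dg^s \circ dg^t$.
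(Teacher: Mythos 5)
Your proposal is correct and follows essentially the same route as the paper: the paper declares the proposition ``straightforward to verify'' after establishing precisely the variation formula, the symmetry lemma, and the computation $\brak{J_\xi'(s),\g_v'(s)} = \tfrac12\dd{}{t}\magn{Z(t)}^2\big\rvert_{t=0}$, and your argument is exactly that verification carried out in detail (with the independence-of-$Z$ point correctly resolved, since the quantity equals $\brak{J_\xi'(s),\g_v'(s)}$, which depends only on $\xi$).
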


\begin{remark}\label{R1.8}
	It follows from (2) above that for any $t\in\bb R$, any $v\in TM$ and any $\xi\in T_v(TM)$ that $\magn{dg^t\xi}^2 = \magn{J_\xi(t)}^2 + \magn{J_\xi'(t)}^2$.
\end{remark}

\begin{definition}\label{D1.9}
	Let $\g_n$ be a sequence of geodesics in $M$, and let $J_n$ be a sequence of Jacobi fields such that $J_n$ is defined on $\g_n$ for every $n$. If $v_n = \g_n'(0)$, choose $\xi_n\in T_{v_n}(TM)$ so that $J_n = J_{\xi_n}$. We say that the Jacobi fields $J_n$ converge to a Jacobi field $J$ on a geodesic $\g$ if $\xi_n \to \xi$ in $T(TM)$, where $v = \g'(0)$, $\xi\in T_v(TM)$, and $J = J_\xi$.
\end{definition}

Note that $\xi_n\to\xi$ in $T(TM)$ if and only if $K\xi_n\to K\xi$ and $d\pi\xi_n \to d\pi\xi$. Therefore $J_n$ converges to $J$ if and only if $\g_n'(0) \to \g'(0)$, $J_n(0)\to J(0)$, and $J_n'(0)\to J'(0)$. If $J_n\to J$ and $(u_n)\sb\bb R$ is a sequence converging to a finite number $u$, then by (2) of Proposition \ref{P1.7}, $J_n(u_n) \to J(u)$ and $J_n'(u_n) \to J'(u)$.

\begin{definition}\label{D1.10}
	$M$ is said to be \emph{compactly homogenous} if there exists a compact set $B\sb M$ such that $M$ is the union of all translates of $B$ by the isometries of $M$.
\end{definition}

If $M$ is homogenous or a Galois Riemannian covering of a compact Riemannian manifold, then $M$ is compactly homogenous. Since isometries preserve sectional curvature, all values of the sectional curvature of $M$ (or Gaussian curvature if $M$ is a surface) are taken at points of $B$, and therefore the sectional curvature of $M$ is uniformly bounded above and below.

\begin{proposition}\label{P1.11}
	Let $M$ be compactly homogenous. For each integer $n > 0$, let $J_n$ be a Jacobi field on a geodesic $\g_n$ with initial velocity $v_n$. If each of the sequences $\magn{v_n}$, $\magn{J_n(0)}$, and $\magn{J_n'(0)}$ is uniformly bounded above, then we can find a sequence $\vp_n$ of isometries of $M$ and a Jacobi field $L$ on a geodesic $\s$, such that by passing to a subsequence, $L_n = d\vp_n J_n \to L$ in the sense of Definition \ref{D1.9}.
\end{proposition}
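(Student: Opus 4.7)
The plan is to use compact homogeneity of $M$ to translate the base points $\pi v_n$ into a common compact set, and then invoke compactness of ball bundles over compact sets to extract convergent subsequences of the initial data. Concretely, I would first fix a compact $B \sb M$ with $M = \bigcup_\vp \vp(B)$ as $\vp$ ranges over isometries of $M$, and for each $n$ choose an isometry $\vp_n$ with $\vp_n(\pi v_n) \in B$. Setting $L_n = d\vp_n \circ J_n$, the fact that isometries preserve the Levi-Civita connection and the curvature tensor ensures that $L_n$ is a Jacobi field along the geodesic $\s_n = \vp_n \circ \g_n$, with initial velocity $w_n = d\vp_n v_n$, initial value $L_n(0) = d\vp_n J_n(0)$, and initial derivative $L_n'(0) = d\vp_n J_n'(0)$ (using that isometries commute with covariant differentiation along curves).

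The next step is to extract convergent subsequences of these three sequences of tangent vectors. Since isometries preserve norms, all three sequences are norm-bounded by hypothesis, and each has base point $\vp_n(\pi v_n) \in B$. Hence each sits inside a set of the form $\set{\eta \in \pi^{-1}(B) : \magn{\eta} \leq R}$, which is compact (locally it looks like $B \times \bar{B}_R(0)$). A standard diagonal argument then yields a single subsequence along which $w_n \to w$, $L_n(0) \to a$, and $L_n'(0) \to b$ in $TM$, with all three limits sharing the base point $\pi w = \lim_n \vp_n(\pi v_n) \in B$. I would then let $\s = \g_w$ and take $L$ to be the unique Jacobi field along $\s$ with $L(0) = a$ and $L'(0) = b$. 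By construction, $\s_n'(0) = w_n \to w = \s'(0)$, $L_n(0) \to L(0)$, and $L_n'(0) \to L'(0)$, so by the criterion noted after Definition \ref{D1.9}, $L_n \to L$.

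The only real obstacle is bookkeeping: one must extract the three subsequences compatibly so that the limits share a common base point, but this is handled by the standard diagonal procedure. A small routine check is also needed to verify that isometries intertwine appropriately with $d\pi$ and the connection map $K$ (so that the identities $L_n(0) = d\vp_n J_n(0)$ and $L_n'(0) = d\vp_n J_n'(0)$ hold in the sense of Definition \ref{D1.6}), but this is immediate from the naturality of the Levi-Civita connection under isometries.
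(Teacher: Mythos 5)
Your proposal is correct and follows essentially the same route as the paper: translate the base points into a fixed compact set via isometries, use boundedness plus compactness of the bounded ball bundle over that set to extract a subsequence along which $w_n$, $L_n(0)$, $L_n'(0)$ converge with a common base point, and then define $L$ by its initial data and invoke the convergence criterion following Definition \ref{D1.9}. The only cosmetic difference is that the paper verifies the compactness step by passing to a local coordinate chart about the limit point rather than citing compactness of the ball bundle directly.
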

\begin{proof}
	Let $p_n = \pi v_n$ and choose a sequence $\vp_n$ of isometries of $M$ so that the sequence $q_n = \vp_np_n$ is contained in a compact subset of $M$. $L_n = d\vp_n J_n$ is a Jacobi field on $\s_n = \vp_n\circ \g_n$, and $L_n'(0) = d\vp_n J_n'(0)$. If $w_n = \s_n'(0) = d\vp_nv_n$, then the sequences $\magn{w_n}$, $\magn{L_n(0)}$, and $\magn{L_n'(0)}$ are uniformly bounded above, since $d\vp_n$ is an isometry. Passing to a subsequence, let $q_n\to q$ in $M$ and let $U$ be a local coordinate system about $q$. For each $n$, the three vectors $w_n$, $L_n(0)$, and $L_n'(0)$ lie in $T_{q_n} M$, and relative to the induced coordinate system $\pi^{-1}(U)$ in $TM$, it is easy to see that the coordinates of each of the sequences $w_n$, $L_n(0)$, and $L_n'(0)$ are uniformly bounded in absolute value, so they are contained in a compact subset of $TM$. Passing to a further subsequence, let $w_n\to w$, $L_n(0)\to u$, and $L_n'(0)\to v$, where $u$, $v$, and $w$ are vectors in $T_q M$. Let $\s = \g_w$, and let $L$ be the Jacobi field on $\s$ such that $L(0) = u$ and $L'(0) = v$. Then $L_n\to L$ by the discussion following Definition \ref{D1.9}. 
\end{proof}

If $\vp$ is an isometry of $M$, then $T_\vp = d\vp$ is an isometry of $TM$, which leaves $SM$ invariant. The following result is equivalent to that just proved.
\begin{proposition}\label{P1.12}
	Let $M$ be compactly homogenous. Let $(v_n)\sb TM$ and $(\xi_n)\sb T(TM)$ be sequences such that $\xi_n\in T_{v_n}(TM)$ for every $n$, and that the sequences $\magn{v_n}$ and $\magn{\xi_n}$ are each uniformly bounded above. Then there exists a sequence $\vp_n$ of isometries of $M$ such that by passing to a subsequence,\linebreak $\xi_n^* = dT_{\vp_n} \xi_n$ converges to a vector $\xi^*\in T(TM)$.
\end{proposition}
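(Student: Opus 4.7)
The plan is to convert the statement into the language of Jacobi fields via the isomorphism $\xi \leftrightarrow J_\xi$ from Proposition \ref{P1.7}, apply Proposition \ref{P1.11}, and then translate the conclusion back using Definition \ref{D1.9}. The parenthetical remark preceding the statement — that Proposition \ref{P1.12} is equivalent to Proposition \ref{P1.11} — suggests exactly this approach.

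First I would associate to each $\xi_n$ its Jacobi field $J_n := J_{\xi_n}$ along $\g_{v_n}$, so that $J_n(0) = d\pi\xi_n$ and $J_n'(0) = K\xi_n$. Using Remark \ref{R1.8} at $t = 0$, we have $\magn{\xi_n}^2 = \magn{J_n(0)}^2 + \magn{J_n'(0)}^2$, so the hypothesis that $\magn{\xi_n}$ is uniformly bounded above immediately gives uniform upper bounds on $\magn{J_n(0)}$ and $\magn{J_n'(0)}$. Combined with the uniform bound on $\magn{v_n}$, the hypotheses of Proposition \ref{P1.11} are satisfied. Applying that proposition yields isometries $\vp_n$ of $M$ and a Jacobi field $L$ on some geodesic $\s$ such that, after passing to a subsequence, $L_n := d\vp_n J_n \to L$ in the sense of Definition \ref{D1.9}.

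The key technical step is to check that this choice of $\vp_n$ works for the present statement, i.e.\ that $J_{\xi_n^*} = L_n$ where $\xi_n^* = dT_{\vp_n}\xi_n$. This is the statement that the identification $\xi \leftrightarrow J_\xi$ is equivariant under isometries. Applying Proposition \ref{P1.5} to the isometry $\vp_n$ (more precisely, the relations $P\circ d\pi_1 = d\pi_2 \circ dP$ and $P\circ K_1 = K_2 \circ dP$ established in its proof), with $P = T_{\vp_n} = d\vp_n$, gives
\[
    d\pi(\xi_n^*) = d\vp_n\bigl(d\pi\,\xi_n\bigr) = d\vp_n J_n(0),
    \qquad K(\xi_n^*) = d\vp_n\bigl(K\xi_n\bigr) = d\vp_n J_n'(0).
\]
Since $d\vp_n$ carries Jacobi fields along $\g_{v_n}$ to Jacobi fields along $\vp_n\circ\g_{v_n} = \g_{d\vp_n v_n}$, uniqueness of Jacobi fields from their initial conditions forces $J_{\xi_n^*} = d\vp_n\circ J_n = L_n$.

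Finally, since $L_n \to L$ in the sense of Definition \ref{D1.9}, writing $L = J_{\xi^*}$ for the corresponding $\xi^* \in T_{\s'(0)}(TM)$ and unwinding the definition gives precisely $\xi_n^* \to \xi^*$ in $T(TM)$, which is the desired conclusion. I do not expect any substantial obstacle here; the only place one must be slightly careful is the equivariance step, since the statement involves $dT_{\vp_n}$ acting on $T(TM)$ rather than $d\vp_n$ acting on $TM$, but Proposition \ref{P1.5} handles exactly this distinction.
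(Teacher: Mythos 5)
Your proof is correct and follows exactly the route the paper intends: the paper gives no separate argument for this proposition, simply declaring it ``equivalent to that just proved'' (Proposition \ref{P1.11}), and your proposal supplies precisely that equivalence, including the one point worth checking --- the isometry-equivariance of $\xi \mapsto J_\xi$ via the relations from Proposition \ref{P1.5}.
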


\newpage
\section{Surfaces without conjugate points}
From here on, assume $M$ is a \emph{surface}. In this section we continue building our framework in order to prove the main result in \textsection 4, and we focus on the case where $M$ has no conjugate points. In many cases, we consider the unit tangent bundle $SM$ instead of $TM$. Assume all geodesics are unit speed. The main goal of this section is to define the $1$-dimensional subspaces $X_s(v)$ and $X_u(v)$ of $T_v(SM)$, which, as we will eventually show, coincide precisely with the subspaces $X_s^*(v)$ and $X_u^*(v)$ in the definition of an Anosov flow when certain conditions are met. We will also state and prove several propositions relevant to these subspaces that will be useful in the final section.

We start by defining a useful method of working with the tangent space along a geodesic. Let $\g$ be a geodesic, and let $E_1(s)$, $E_2(s)$ be a parallel orthonormal frame along $\g$ with $E_2(s) = \g'(s)$; indeed, $E_1(s)$ can be defined by parallel transport. Then a perpendicular vector field $J$ along $\g$ can be written as $J(s) = y(s) E_1(s)$, and we can identify $J$ with the ``curve`` $s \maps y(s)$ in $\bb R$. Note that $J'(s) = y'(s) E_1(s)$. We call such a frame an \textit{adapted frame field}.

Using an adapted frame field along $\g$, consider the following Jacobi equation in local coordinates
\[
	f''(s) + \k(s)f(s) = 0 \; , \tag{J}
\]
where $\k(s)$ is the Gaussian curvature. Then the solutions $f(s)$ to this equation correspond precisely to the perpendicular Jacobi fields $J(s) = f(s) E_1(s)$ along $\g$.

Next, recall the following useful definition from differential equations.
\begin{definition} Let $f(s)$ and $g(s)$ be differentiable. Then the \textit{Wronskian} $W(f,g)(s)$ is given by
\[
	W(f,g)(s) = f'(s)g(s) - f(s)g'(s) \; .
\]
\end{definition}

We have that for any two solutions $x, y$ of (J), the Wronskian is constant, for
\begin{align*}
	W(f,g)'(s) &= f''(s) g(s) - f(s) g''(s) \\
	&= -\k(s)f(s)g(s) + \k(s)f(s)g(s) \\
	&= 0 \; .
\end{align*}

Now we define some Jacobi fields that will be helpful soon. Let $a(s)$ be the unique solution to (J) such that
\[
	a(0) = 0 \text{ and } a'(0) = 1 \;.
\]
Since $M$ has no conjugate points, $a(s) \neq 0$ for all $s\neq 0$.  In particular, $a(s) < 0$ for $s < 0$ and $a(s) > 0$ for $s > 0$ by the above conditions. Next, for $t > 0$, let $d_t(s)$ be the unique solution to (J) such that
\[
	d_t(0) = 1 \text{ and } d_t(t) = 0 \; .
\]
	In fact, for $s > 0$, $d_t(s)$ is given by
\[
	d_t(s) = a(s) \int_s^t a(u)^{-2} du \; ,
\]
where the superscript denotes exponentiation, not function composition and inversion. To see this, let $b_t(s)$ be the integral expression above for $s > 0$. Then direct computation shows that $b_t(s)$ satisfies (J) and that $b_t(t) = 0$ and $b_t'(t) = -1/a(t)$. Thus for $s > 0$, $b_t(s)$ is the unique solution of (J) such that $b_t(t) = 0$ and $b_t'(t) = -1/a(t)$. Extend $b_t(s)$ to all $s\in\bb R$ using the existence and uniqueness theorem on (J). It is left to show that $b_t(0) = 1$, for then again by the existence and uniqueness theorem, we shall have $b_t(s) \equiv d_t(s)$ for all $s\in\bb R$. Indeed, we have $W(a, b_t)(t) = -a(t)b_t'(t) = 1$, and since the Wronskian is constant,
\[
	1 = W(a, b_t)(0) = b_t(0) \; ,
\]
as desired.

Next, for $0 < r < t$ and all $s > 0$, we have 
\[
	d_t(s) - d_r(s) = a(s) \int_r^t a(u)^{-2}du \;. 
\]
Notice that since $a(s) > 0$ for $s > 0$, $d_t(s)$ is strictly increasing in $t$ for fixed $s > 0$. By direct computation,
\[
	d_t'(0) - d_r'(0) = \int_r^t a(u)^{-2} du \; ,
\]
which is positive and strictly increasing in $t$. We are now ready for the following result, which will be useful for defining $X_s(v)$ and $X_u(v)$.

\begin{proposition}\label{Hao1} With the definitions from above, we have
\begin{enumerate}
	\item $\lim_{t\to\infty} d_t'(0)$ exists.
	\item Let $d(s)$ be the solution of (J) such that
\[
	d(0) = 1 \text{ and } d'(0) = \lim_{t\to\infty} d_t'(0)\; .
\]
		Then $d(s) = \lim_{t\to\infty} d_t(s)$.
	\item $d(s) \neq 0$ for all $s$.
\end{enumerate}
\end{proposition}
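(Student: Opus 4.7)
The plan is to handle the three parts in order, with Part 1 being the only substantive step. For Part 1, the discussion preceding the proposition already shows $d_t'(0)$ is strictly increasing in $t$, so it suffices to produce a uniform upper bound. My strategy is to compare $d_t$ with a ``backward'' analogue: for each fixed $r > 0$, the no-conjugate-points hypothesis guarantees a unique Jacobi field $d_{-r}$ along $\g$ with $d_{-r}(-r) = 0$ and $d_{-r}(0) = 1$. Applying the hypothesis once more, $d_{-r}$ has no other zeros, so by continuity it is strictly positive on $(-r, \infty)$; in particular $d_{-r}(t) > 0$ for every $t > 0$. Now $d_t - d_{-r}$ is a Jacobi field vanishing at $s = 0$, hence equal to $(d_t'(0) - d_{-r}'(0))\,a(s)$, since the subspace of Jacobi fields vanishing at $0$ is one-dimensional and spanned by $a$. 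Evaluating at $s = t$ and using $d_t(t) = 0$ rearranges to
\[
	d_t'(0) - d_{-r}'(0) = -\frac{d_{-r}(t)}{a(t)} < 0,
\]
since both $d_{-r}(t) > 0$ and $a(t) > 0$. Fixing any particular $r > 0$ bounds $d_t'(0)$ uniformly above by the finite number $d_{-r}'(0)$, and combined with monotonicity this yields a finite limit $L := \lim_{t \to \infty} d_t'(0)$.

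Part 2 is then essentially free. Let $d$ be the solution of (J) defined in the statement; then $d_t - d$ is a Jacobi field vanishing at $0$, so by the same spanning argument $d_t(s) - d(s) = (d_t'(0) - L)\,a(s)$. The coefficient tends to $0$ as $t \to \infty$, so $d_t \to d$ pointwise on all of $\bb R$.

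For Part 3, observe that each $d_t$ ($t > 0$) is a nontrivial Jacobi field vanishing only at $t$, and since $d_t(0) = 1 > 0$, continuity forces $d_t > 0$ on $(-\infty, t)$. For any fixed $s \in \bb R$, choosing $t$ with $t > \max(s, 0)$ gives $d_t(s) > 0$, and passing to the limit yields $d(s) \geq 0$ for all $s$. If $d(s_0) = 0$ for some $s_0$, then $s_0$ is an interior minimum of the smooth nonnegative function $d$, forcing $d'(s_0) = 0$ as well. But the uniqueness theorem for the linear ODE (J) then forces $d \equiv 0$, contradicting $d(0) = 1$. Hence $d$ vanishes nowhere. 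The main obstacle is the bound in Part 1: the trick is introducing the backward-boundary Jacobi fields $d_{-r}$, using the no-conjugate-points hypothesis both to construct them and to guarantee $d_{-r}(t) > 0$.
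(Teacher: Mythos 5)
Your proof is correct. Parts 1 and 2 follow essentially the same route as the paper: you introduce the backward Jacobi fields $d_{-r}$ as a comparison family to produce a uniform upper bound on $d_t'(0)$, using positivity of $a$ on $(0,\infty)$ and of $d_{-r}$ on $(-r,\infty)$, and then pass to the limit via the one-dimensional space of solutions of (J) vanishing at $0$. The genuine divergence is in Part 3. The paper argues that if $d(t_0)=0$ then by uniqueness $d \equiv d_{t_0}$, and then splits into cases: for $t_0>0$ it uses the strict monotonicity of $t\mapsto d_t(s)$, and for $t_0=-q_0<0$ it runs a homeomorphism/continuity argument on $q\mapsto d_{-q}'(0)$ to manufacture some $t>0$ and $q>0$ with $d_t'(0)=d_{-q}'(0)$, a contradiction. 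Your argument avoids all of this: you first establish $d\geq 0$ everywhere by passing the pointwise inequality $d_t(s)>0$ (valid for $t>\max(s,0)$, since $d_t$ vanishes only at $t$) to the limit, then note that a zero $s_0$ of the nonnegative $C^2$ function $d$ is automatically an interior critical point, so $d(s_0)=d'(s_0)=0$, whence $d\equiv 0$ by uniqueness for (J), contradicting $d(0)=1$. This is shorter and cleaner: it buys you a uniform argument with no case split on the sign of $t_0$, at the small cost of needing the easy observation that $d$ is a pointwise limit of positive functions.
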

\begin{proof}
	This proof is an adaptation of Lemma 1 in \cite{GreenHopf}. For (1), it suffices to show that \linebreak$\lim_{t\to\infty} d_t'(0) - d_r'(0)$ exists for fixed $r$ with $0 < r < t$. Given $q > 0$, we have by direct computation that the solution $d_{-q}$ of (J) for which $d_{-q}(0) = 1$ and $d_{-q}(-q) = 0$ can be given by
\[
	d_{-q}(s) = a(s)n_{t, -q} + d_t(s)
\]
	for any $t > 0$, where
\[
	n_{t, -q} = -a(-q)^{-1} d_t(-q) \; .
\]
	Notice that the choice of $t$ does not matter by uniqueness of $d_{-q}$. Next, we claim that $n_{t, -q}$ is positive. To see this, we have $a(s) < 0$ when $s < 0$, as previously mentioned. Next, since $d_t(0) = 1$ and $d_t(t) = 0$, the fact that $M$ has no conjugate points implies $d_t(s) > 0$ for $s < t$, so $d_t(-q) > 0$. By direct computation, we have
\[
	d_{-1}'(0) - d_t'(0) = n_{t, -1} > 0
\]
	for any $t > 0$. Thus $d_t'(0) - d_r'(0)$ is increasing in $t$ and bounded above by $d_{-1}'(0) - d_r'(0)$, and hence the desired limit exists. The fact that $d(s) = \lim_{t\to\infty} d_t(s)$ follows from the fact that the solutions of (J) depend continuously on the initial conditions.
	
	Finally, we show that $d(s) \neq 0$ for all $s$. Assume toward contradiction that $d(t_0) = 0$ for some $t_0 \in\bb R$. Since $d(0) = 1$ by definition, we have that $t_0 \neq 0$, and by uniqueness of the solutions of (J), we have $d(s) \equiv d_{t_0}(s)$. Since $d_t(s)$ is strictly increasing in $t$ for $t > 0$ for fixed $s > 0$, we have $d_{t_0}(s) > d_t(s)$ for $t > 0$ and $s > 0$, which means $t_0$ cannot be positive. Thus $t_0 = -q_0$ for some $q_0 > 0$, so $d(s) \equiv d_{-q_0}(s)$. Notice that since $d_{-q}'(0) = n_{t, -q} + d_t'(0)$, we have $d_{-q}'(0)$ is continuous in $q$ for $q > 0$. Let $U$ be a positive interval containing $q_0$. By the existence and uniqueness theorem, we have that the map $q \maps d_{-q}'(0)$ sends $U$ homeomorphically onto an interval containing $d_{-q}'(0)$. Since $\lim_{t\to\infty} d_t'(0) = d'(0) = d_{-q_0}'(0)$, this implies that $d_t'(0) = d_{-q}'(0)$ for some $t > 0$ and $q > 0$, a contradiction by the existence and uniqueness theorem. Thus $d(s)$ is nonvanishing.
\end{proof}

\begin{remark}\label{dexplicit}
	For $s > 0$, $d(s)$ can be given by
\[
	d(s) = a(s) \int_s^\infty a(u)^{-2} du \;.
\]
\end{remark}

\begin{corollary}\label{Hao1} Again with the definitions from above, we have
\begin{enumerate}
	\item $\lim_{t\to -\infty} d_t'(0)$ exists.
	\item Let $\bar d(s)$ be the solution of (J) such that
\[
	\bar d(0) = 1 \text{ and } \bar d'(0) = \lim_{t\to -\infty} d_t'(0)\; .
\]
		Then $\bar d(s) = \lim_{t\to -\infty} d_t(s)$.
	\item $\bar d(s) \neq 0$ for all $s$.
\end{enumerate}
\end{corollary}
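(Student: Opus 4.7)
The plan is to deduce this corollary from the preceding proposition applied to the time-reversed geodesic. Given the geodesic $\g$ with adapted frame and curvature function $\k(s)$ used to form $a$ and $d_t$, let $\tilde\g(s) = \g(-s)$, equip it with the reversed adapted frame, and write $\tilde a,\tilde d_t,\tilde d$ for the corresponding functions built from $\tilde\g$. Since $f(s)$ solves (J) along $\g$ if and only if $s\maps f(-s)$ solves the analogous equation with curvature $\k(-s)$ along $\tilde\g$, the defining boundary conditions together with uniqueness for (J) force
\[
	\tilde a(s) = -a(-s), \qquad \tilde d_t(s) = d_{-t}(-s) \quad (t>0).
\]

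With these identifications each clause of the corollary follows by symmetry. For (1), the preceding proposition applied to $\tilde\g$ gives that $\lim_{t\to\infty}\tilde d_t'(0)$ exists; the chain rule yields $\tilde d_t'(0) = -d_{-t}'(0)$, so substituting $\tau=-t$ shows $\lim_{\tau\to-\infty}d_\tau'(0)$ exists. For (2), let $\tilde d$ be the solution produced by part (2) of the preceding proposition for $\tilde\g$, and set $\bar d(s) := \tilde d(-s)$. Then $\bar d$ solves (J) along $\g$, and its initial conditions
\[
	\bar d(0)=\tilde d(0)=1, \qquad \bar d'(0)=-\tilde d'(0)=\lim_{\tau\to-\infty}d_\tau'(0)
\]
are exactly those stipulated for $\bar d$ in the corollary. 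For every $s$,
\[
	\bar d(s) = \tilde d(-s) = \lim_{t\to\infty}\tilde d_t(-s) = \lim_{t\to\infty} d_{-t}(s) = \lim_{\tau\to-\infty} d_\tau(s).
\]
For (3), $\tilde d$ is nonvanishing by part (3) of the preceding proposition, so $\bar d(s) = \tilde d(-s)$ is nonvanishing as well.

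The only nontrivial step is verifying the symmetry identities for $\tilde a$ and $\tilde d_t$, which is a short application of uniqueness of solutions of (J) with prescribed boundary conditions. Everything else is bookkeeping to track signs under the involution $s\mapsto -s$, so there is no genuine obstacle: the content of the corollary is exactly that of the preceding proposition transported along the time reversal.
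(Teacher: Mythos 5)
Your proof is correct and is exactly the approach the paper takes — the paper's proof is the single sentence ``Apply the results from the preceding proposition to the reverse geodesic $\g(-s)$,'' and you have simply carried out that reduction explicitly, including the sign bookkeeping for $\tilde a$, $\tilde d_t$, and the initial conditions of $\bar d$.
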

\begin{proof}
	Apply the results from the preceding proposition to the reverse geodesic $\g (-s)$.
\end{proof}

Next, for any number $t\neq 0$, we define a linear map $\xi \to \xi_t: T_v(TM) \to T_v(TM)$ for every $v\in TM$. Given a vector $v\in TM$ and a vector $\xi\in T_v(TM)$, let $J(s)$ be the unique Jacobi field along $\g_v$ such that
\[
	J(0) = d\pi (\xi) \text{ and } J(t) = 0 \; .
\]
	Let $\xi_t\in T_v(TM)$ be the vector that corresponds to $J$ under our isomorphism, i.e., we have $J = J_{\xi_t}$. Thus $\xi_t$ is the unique vector such that
\[
	 d\pi(\xi_t) = J_{\xi_t}(0) = d\pi(\xi) \text{ and } d\pi\circ dg^t(\xi_t) = J_{\xi_t}(t) = 0 \; .
\]
	The kernel of this map is $\ker d\pi$, which is vertical subspace of $T_v(TM)$.
	
	For $v\in SM$ it is not true in general that this map leaves $T_v(SM)$ invariant; in fact, for $\xi\in T_v(SM)$ we have $\xi_t\in T_v(SM)$ if and only if $\brak{\xi, Y(v)} = 0$. Indeed, if $\xi_t\in T_v(SM)$, then $s\to \brak{J_{\xi_t}(s), \g_v'(s)}$ is a constant function by Proposition \ref{P1.7} (3), but since $J_{\xi_t}(t) = 0$, we have $\brak{J_{\xi_t}(s), \g_v'(s)} \equiv 0$, and by Proposition \ref{P1.7} (5), $\brak{\xi, Y(v)} = 0$. Conversely, suppose $\brak{\xi, Y(v)} = 0$. Since $KY(v) = 0$ and $d\pi Y(v) = v$, we have
\[
	0 = \brak{\xi, Y(v)} = \brak{d\pi(\xi), v} = \brak{d\pi(\xi_t), v} = \brak{J_{\xi_t}(0), \g_v'(0)} \; .
\]

	Since $J_{\xi_t}(t) = 0$, we also have that it is perpendicular at $t$, and hence perpendicular everywhere. Thus again by Proposition \ref{P1.7} (3), $\xi_t\in T_v(SM)$.
	
\begin{definition}
	Let $v\in SM$. We define
\begin{enumerate}
	\item[(i)] $X_s(v) = \set{\xi\in T_v(SM)\text{ such that } \brak{\xi, Y(v)} = 0\text{ and } \xi_t\to\xi\text{ as } t\to\infty}$, and
	\item[(ii)] $X_u(v) = \set{\xi\in T_v(SM)\text{ such that } \brak{\xi, Y(v)} = 0\text{ and } \xi_t\to\xi\text{ as } t\to -\infty}$.
\end{enumerate}
	These are called the \emph{stable} and \emph{unstable subspaces} determined by $v$.
\end{definition}

\begin{definition}
	Let $\g$ be a unit speed geodesic in $M$ with initial velocity $v$. We define $J_s(\g)$ and $J_u(\g)$ to be the images in $J(\g)$ of $X_s(v)$ and $X_u(v)$, respectively, under our isomorphism. These are called the \emph{stable} and \emph{unstable subspaces} of perpendicular Jacobi fields along $\g$.
\end{definition}

\begin{remark}\label{HaoJacobi}
	(1) Let $\xi\in X_s(v)$. Since $J_{\xi_t}$ is perpendicular, we have that relative to an adapted frame field $E_1, E_2$ along $\g$ with $E_2(s) = \g'(s)$, $J_{\xi_t}$ can be given by
\[
	J_{\xi_t}(s) = cd_t(s) E_1(s)
\]
	for an appropriately chosen $c\in\bb R$ so that $d\pi\xi = cd_t(0)E_1(0) = cE_1(0)$. Notice that $c$ does not depend on $t$. Taking $t\to\infty$ shows that
\[
	J_{\xi}(s) = cd(s) E_1(s)\;.
\]
	For $\xi\in X_u(v)$, we have by similar reasoning that $J_{\xi}(s) = c\bar d(s) E_1(s)$. Therefore, we can express $J_s(\g)$ and $J_u(\g)$ as
\begin{enumerate}
	\item[(i)] $J_s(\g) = \set{cd(s) E_1(s): c\in\bb R}$ and
	\item[(ii)] $J_u(\g) = \set{c\bar d(s) E_1(s): c\in\bb R}$.
\end{enumerate}
	Clearly these are 1-dimensional vector subspaces of $J(\g)$, which means $X_s(v)$ and $X_u(v)$ are\linebreak 1-dimensional vector subspaces of $T_v(SM)$.
	
	(2) Let $\g$ be a unit speed geodesic in $M$. If $\k\equiv 0$, then $J_s(\g) = J_u(\g) = $ the vector space of all perpendicular parallel vector fields on $\g$. It will follow by Theorem \ref{T3.2} that for a compact surface $M$ without conjugate points the geodesic flow on $SM$ is of Anosov type if and only if $J_s(\g) \cap J_u(\g) = \set 0$ for every unit speed geodesic $\g$ in $M$.
\end{remark}

\begin{proposition}\label{R2.3.2}
	If $p: N\to M$ is a surjective local isometry of complete Riemannian manifolds and $M$ has no conjugate points, then $N$ has no conjugate points, and
\[
	dPX_s(v) = X_s(Pv) \text{ and } dPX_u(v) = X_u(Pv)
\]
	for every $v\in SN$, where $P = dp: TN \to TM$.
\end{proposition}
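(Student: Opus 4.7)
For the first assertion, suppose toward contradiction that some geodesic $\g$ in $N$ admits a nontrivial Jacobi field $J$ with $J(a)=J(b)=0$ for $a\neq b$. Since $p$ is a local isometry, $p\circ\g$ is a geodesic in $M$, and $p$ preserves the Levi-Civita connection and curvature tensor, so $dp\circ J$ is a Jacobi field along $p\circ\g$ that vanishes at both $a$ and $b$. As $dp$ is a fiberwise linear isometry, $dp\circ J$ is nonzero wherever $J$ is, producing conjugate points on $M$, a contradiction.

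The main step for the second assertion, which I expect to be the chief obstacle, is the commutation identity $dP(\xi_t)=(dP\xi)_t$ for every $\xi\in T_v(TN)$ and every $t\neq 0$. I would verify it by checking that $dP(\xi_t)\in T_{Pv}(TM)$ satisfies the two conditions that uniquely determine $(dP\xi)_t$, using the naturality relations $P\circ d\pi_1=d\pi_2\circ dP$ and $P\circ g^t=g^t\circ P$ from the proof of Proposition \ref{P1.5}. These give
\[
	d\pi(dP(\xi_t))=P(d\pi(\xi_t))=P(d\pi(\xi))=d\pi(dP(\xi))
\]
and
\[
	d\pi\circ dg^t(dP(\xi_t))=P\circ d\pi\circ dg^t(\xi_t)=P(J_{\xi_t}(t))=0,
\]
after which uniqueness of the construction $(\cdot)_t$ forces $dP(\xi_t)=(dP\xi)_t$.

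Now let $\xi\in X_s(v)$. Since $P(SN)=SM$, $dP(\xi)\in T_{Pv}(SM)$. By Proposition \ref{P1.5}(3) together with the isometry property of $P$,
\[
	\brak{dP(\xi),Y(Pv)}=\brak{dP(\xi),dP\,Y(v)}=\brak{\xi,Y(v)}=0.
\]
By continuity of $dP$, the convergence $\xi_t\to\xi$ as $t\to\infty$ transports to $(dP\xi)_t=dP(\xi_t)\to dP(\xi)$, so $dP(\xi)\in X_s(Pv)$. Thus $dP\,X_s(v)\sb X_s(Pv)$. Both spaces are one-dimensional by Remark \ref{HaoJacobi}, and $dP$ is injective on each tangent space since $P$ is a local isometry, so the inclusion is equality. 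The identity $dP\,X_u(v)=X_u(Pv)$ follows by the identical argument with $t\to-\infty$.
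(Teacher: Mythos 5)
Your proposal is correct and follows essentially the same route as the paper: push Jacobi fields forward to see $N$ has no conjugate points, establish $dP(\xi_t)=(dP\xi)_t$ from the naturality relations of Proposition \ref{P1.5}, and transport the orthogonality and the limit $\xi_t\to\xi$ through $dP$. Your closing dimension count to upgrade the inclusion $dP\,X_s(v)\sb X_s(Pv)$ to an equality is a detail the paper leaves implicit, but it is the same argument in substance.
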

\begin{proof}
	$N$ has no conjugate points since for any Jacobi field $J$ on a geodesic $\g$ in $N$, $PJ$ is a Jacobi field on the geodesic $p\circ\g$ in $M$. If $\pi_1:TN\to N$ and $\pi_2:TM\to M$ are the projection maps, then $p\circ\pi_1 = \pi_2\circ P$. We also have $P\circ g^t = g^t\circ P$ for every $t\in\bb R$, where $g^t$ is the geodesic flow on both $TN$ and $TM$. From these two relations it follows that $(dP\xi)_t = dP(\xi_t)$ for any $t\neq 0$, any $v\in TN$, and any $\xi\in T_v(TN)$. If $\brak{\xi, Y(v)} = 0$, then $\brak{dP\xi, Y(Pv)} = 0$ by Proposition \ref{P1.5}. Therefore $dPX_s(v) = X_s(Pv)$ and $dPX_u(v) = X_u(Pv)$.
\end{proof}

\begin{proposition}\label{P2.4} We have
	\begin{enumerate}
		\item If $S: SM\to SM$ is the map which takes a vector $v$ to $-v$, then $X_u(-v) = dSX_s(v)$ and $X_s(-v) = dSX_u(v)$, and
		\item For any $t\in\bb R$ and any $v\in SM$, $dg^t X_s(v) = X_s(g^tv)$ and $dg^tX_u(v) = X_u(g^tv)$.
	\end{enumerate}
\end{proposition}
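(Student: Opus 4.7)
For part (1), the key is to understand how $dS$ acts on the relevant tangent bundles. Since $\pi \circ S = \pi$, we get $d\pi \circ dS = d\pi$; and picking a curve $Z$ with $Z'(0) = \xi$ and noting that $S\circ Z = -Z$ shows $K(dS\xi) = -K\xi$ by linearity of the connection. Combined with Definition \ref{D1.6}, this identifies the Jacobi field along $\g_{-v}$ as $J_{dS\xi}(s) = J_\xi(-s)$. Writing $\eta\mapsto\eta^{[s]}$ for the analogue of the map $\xi\mapsto\xi_s$ based at $-v$, uniqueness then gives $dS(\xi_t) = (dS\xi)^{[-t]}$, while the orthogonality $\brak{dS\xi, Y(-v)} = 0$ is immediate from $d\pi Y(-v) = -v$ and $KY = 0$. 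For $\xi\in X_s(v)$, continuity of $dS$ lets us pass to the limit: $dS(\xi_t)\to dS\xi$ as $t\to\infty$, so $(dS\xi)^{[-t]} \to dS\xi$ as $-t\to -\infty$, showing $dS\xi\in X_u(-v)$. Applying the same argument with $v$ replaced by $-v$ and using $S\circ S = \mathrm{id}$ yields the reverse inclusion and the companion identity.

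For part (2), I would exploit the explicit description in Remark \ref{HaoJacobi}: $J_s(\g)$ is the line spanned by $d(s) E_1(s)$, with $d$ as in Proposition \ref{Hao1}. Setting $w = g^t v$ and observing that $\g_w(u) = \g_v(u+t)$, take the adapted frame along $\g_w$ to be $E_1^w(u) = E_1(u+t)$. The main technical step is the shift identity
\[
d^w(u) = \f{d(u+t)}{d(t)}\;,
\]
where $d^w$ is the distinguished function for $\g_w$; note that $d(t)\neq 0$ by Proposition \ref{Hao1}(3), so the right-hand side is well-defined. To prove it, observe that the auxiliary Jacobi field $d_T^w$ along $\g_w$ with $d_T^w(0) = 1$ and $d_T^w(T) = 0$, re-interpreted as a Jacobi field along $\g_v$, must by uniqueness equal $d_{T+t}(u+t)/d_{T+t}(t)$; sending $T\to\infty$ and applying Proposition \ref{Hao1}(2) gives the formula. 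Then for $\xi\in X_s(v)$ with $J_\xi(s) = c\, d(s) E_1(s)$, Proposition \ref{P1.7}(2) gives
\[
J_{dg^t\xi}(u) = d\pi\circ dg^u(dg^t\xi) = J_\xi(u+t) = c\, d(t)\, d^w(u) E_1^w(u)\;,
\]
which lies in $J_s(\g_w)$, so $dg^t\xi\in X_s(g^t v)$. The reverse inclusion $X_s(g^t v)\sb dg^t X_s(v)$ follows by applying the same inclusion to $g^t v$ with time parameter $-t$; the $X_u$ statement is analogous (using $\bar d$ and $t\to -\infty$), or can be deduced from part (1) together with $S\circ g^t = g^{-t}\circ S$.

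The main obstacle is the shift identity for $d$, which requires careful bookkeeping of the limits defining $d$ as the basepoint slides along the geodesic; the non-vanishing clause in Proposition \ref{Hao1}(3) is essential here, both to make $d(t)$ a legitimate denominator and to guarantee that the limiting stable Jacobi field along $\g_w$ is non-trivial.
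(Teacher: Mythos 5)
Your proof is correct. Part (1) is essentially the paper's argument: both reduce to the identity $dS(\xi_t) = (dS\xi)_{-t}$, established from $\pi\circ S = \pi$ and $S\circ g^a = g^{-a}\circ S$, and then pass to the limit. You spell out a bit more (the behaviour of $K$ under $dS$ and the Jacobi-field reflection $J_{dS\xi}(s)=J_\xi(-s)$), but the skeleton is identical.

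Part (2) is where you genuinely diverge. The paper proves $g^t$-invariance of $X_s$ directly from the limit definition: it establishes Lemma~\ref{L2.5} (a uniform bound $\magn{K\psi}\leq b\magn{d\pi\psi}$ for vectors $\psi$ whose Jacobi fields vanish at a later time $\geq t_0$), then uses the triangle inequality
\[
\magn{dg^a\xi - (dg^a\xi)_t} \leq \magn{dg^a(\xi-\xi_{t+a})} + \magn{\psi_t}, \qquad \psi_t = dg^a(\xi_{t+a}) - (dg^a\xi)_t ,
\]
together with the lemma to control $\magn{\psi_t}$, concluding $(dg^a\xi)_t \to dg^a\xi$. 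You instead use the explicit adapted-frame description $J_s(\g)=\set{c\,d(s)E_1(s)}$ from Remark~\ref{HaoJacobi} and prove a shift identity $d^w(u) = d(u+t)/d(t)$ for $w = g^tv$, which immediately gives $J_{dg^t\xi}\in J_s(\g_w)$. Both are correct; I checked the shift identity carefully (it follows from $d_T^w(u) = d_{T+t}(u+t)/d_{T+t}(t)$ for large $T$, letting $T\to\infty$ via Proposition~\ref{Hao1}(2) and using the nonvanishing in Proposition~\ref{Hao1}(3) to divide). Your route is shorter and entirely self-contained given the surface-specific Remark~\ref{HaoJacobi}, and it even delivers a clean formula for how the stable solution transforms under reparametrization, which could be reused elsewhere. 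The paper's route is the one that survives in arbitrary dimension (where there is no one-dimensional adapted-frame description), and it is the Eberlein argument this exposition is following; that is the main reason the paper takes the longer path and proves Lemma~\ref{L2.5}. One small caveat in your write-up: when you conclude $dg^t\xi\in X_s(g^tv)$ from $J_{dg^t\xi}\in J_s(\g_w)$, you are implicitly using that $J_s(\g_w)$ is by definition the image of $X_s(w)$; this is fine, but it would be worth saying so, since the reader might otherwise worry you still need to verify $\brak{dg^t\xi, Y(g^tv)}=0$ separately (which of course also follows from Proposition~\ref{P1.7}(5)).
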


\begin{proof}[Proof of (1)]
	Let $\xi\in T_v(TM)$ and $t\neq 0$. It suffices to show that $dS(\xi_t) = (dS\xi)_{-t}$. By definition of $(dS\xi)_{-t}$, it suffices to show that $d\pi \circ dS(\xi_t) = d\pi\circ dS (\xi)$ and $d\pi\circ dg^{-t}\circ dS(\xi_t) = 0$. The first condition follows from the fact that $\pi\circ S = \pi$. The second condition also requires the fact that $S\circ g^a = g^{-a} \circ S$ for any $a\in\bb R$, which we have by
\[
	S\circ g^a(v) = S(\g_v'(a)) = -\g_v'(a) = \g_{-v}' (-a) = g^{-a}(-v) = g^{-a}\circ S(v) \; .
\]
\end{proof}
Before we prove (2), we need the following lemma.

\begin{lemma}\label{L2.5}
	Let $v\in SM$ be given. Then there exist numbers $b = b(v) > 0$ and $t_0 = t_0(v) > 0$ such that if $\xi\in T_v(TM)$ satisfies $d\pi\circ dg^t (\xi) = 0$ for some $t\geq t_0$, then $\magn{K\xi} \leq b\magn{d\pi\xi}$.
\end{lemma}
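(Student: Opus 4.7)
The plan is to translate the hypothesis into the language of Jacobi fields via Proposition \ref{P1.7} and then bound the tangential and perpendicular components of $J_\xi$ separately, using Proposition \ref{Hao1} to control the latter.

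First, I would set $J = J_\xi$, so that $J(0) = d\pi\xi$ and $J'(0) = K\xi$ by Definition \ref{D1.6}. Proposition \ref{P1.7}(2) rewrites the hypothesis $d\pi\circ dg^t(\xi) = 0$ as $J(t) = 0$, and the lemma becomes the assertion that $\|J'(0)\| \leq b\|J(0)\|$ whenever $t \geq t_0$. I would then decompose $J = J^T + J^\perp$ with tangential part $J^T(s) = (\alpha + \beta s)\gamma_v'(s)$ and perpendicular part $J^\perp$; since $\gamma_v$ is a geodesic, $(J^T)'(s) = \beta\gamma_v'(s)$, so $J'(0) = \beta\gamma_v'(0) + (J^\perp)'(0)$ is also an orthogonal decomposition.

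The tangential piece is essentially free: $J(t) = 0$ forces $\alpha + \beta t = 0$, so $\beta = -\alpha/t$, and since $\gamma_v$ is unit speed we have $|\alpha| \leq \|J(0)\|$, giving $\|(J^T)'(0)\| \leq \|J(0)\|/t$. For the perpendicular piece, I would pick an adapted frame $E_1, E_2 = \gamma_v'$ and write $J^\perp(s) = y(s)E_1(s)$ with $y$ a solution of (J). If $y(0) = 0$, then $y(0) = y(t) = 0$ together with the no conjugate points hypothesis forces $y \equiv 0$, so $(J^\perp)'(0) = 0$. Otherwise, uniqueness of solutions of (J) with prescribed values at $0$ and $t$ (itself a consequence of no conjugate points) yields $y(s) = y(0)d_t(s)$, whence $y'(0) = y(0)\,d_t'(0)$.

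The main (and essentially only) obstacle is obtaining a uniform bound on $d_t'(0)$ for large $t$. For this I would invoke Proposition \ref{Hao1}, whose proof shows $t \mapsto d_t'(0)$ is strictly increasing on $(0,\infty)$ and converges to the finite value $d'(0)$; hence $d_t'(0)$ is bounded on any interval $[t_0,\infty)$ with $t_0 > 0$. Fixing such a $t_0 = t_0(v)$ and setting $M_0 = M_0(v) = \sup_{t \geq t_0}|d_t'(0)|$, the orthogonal decomposition of $J'(0)$ gives
\[
    \|K\xi\|^2 = \|J'(0)\|^2 \leq \left(\frac{1}{t_0^2} + M_0^2\right)\|d\pi\xi\|^2
\]
whenever $t \geq t_0$, proving the lemma with $b(v) = (t_0^{-2} + M_0^2)^{1/2}$. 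The constants depend on $v$ because the solutions $d_t$ and $a$ are constructed along the specific geodesic $\gamma_v$.
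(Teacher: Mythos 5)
Your proof is correct and follows essentially the same route as the paper: decompose $J_\xi$ into tangential and perpendicular parts, bound the tangential derivative by using the constraint $\alpha + \beta t = 0$ together with $t \geq t_0$, bound the perpendicular derivative via the relation $y'(0) = y(0) d_t'(0)$ and the convergence $d_t'(0) \to d'(0)$ from Proposition~\ref{Hao1}, and combine by orthogonality. The only cosmetic difference is that the paper chooses $t_0 > 1$ so the tangential bound simplifies to $\|J_2'(0)\| \leq \|J_2(0)\|$ and ends up with $b = 1 + |d'(0)|$, whereas you keep the factor $1/t_0$ and obtain a slightly different (but equally valid) constant $b = (t_0^{-2} + M_0^2)^{1/2}$.
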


\begin{proof}
	We may assume that $d\pi\xi \neq 0$, or otherwise the Jacobi field $J_\xi \equiv 0$ since it vanishes at $0$ and $t\geq t_0 > 0$, and $K\xi = J_{\xi}'(0) = 0$. First, we consider the case when $J_\xi$ is perpendicular to $\g_v$. Relative to an adapted frame field along $\g_v$, if $d\pi\circ dg^t (\xi) = 0$ for some $t > 0$, we have $J_\xi = cd_t(s) E_1(s)$, with $c$ chosen so that $cd_t(0)E_1(0) = cE_1(0) = d\pi\xi$. Thus we have
\[
	\magn{K\xi} = \magn{J_\xi'(0)} = \abs{d_t'(0)}\abs{c} = \abs{d_t'(0)}\magn{d\pi \xi} \; .
\]
	Since $d_t'(0) \to d'(0)$ as $t\to\infty$, we can choose $t_0 = t_0(v) > 1$ so that $\abs{d_t'(0)} \leq 1 + \abs{d'(0)}$ for $t\geq t_0$. Setting $b = 1 + \abs{d'(0)}$, if $d\pi\circ dg^t(\xi) = 0$ for some $t\geq t_0$, then $\magn{K\xi} \leq b\magn{d\pi\xi}$.
	
	If $J$ is an arbitrary Jacobi field on $\g$, we may write $J(s) = J_1(s) + J_2(s)$, where $J_1$ is a perpendicular Jacobi field and $J_2$ is a tangential Jacobi field of the form $J_2(s) = (\a s + \b)\g'(s)$ for suitable constants $\a$ and $\b$. If $J(t) = 0$ for some $t\geq t_0$, then $J_1(t) = J_2(t) = 0$, and hence $\a = -\b/t$. Since $t \geq t_0 > 1$, we have $\magn{J_2'(0)} = \abs\a \leq \abs\b = \magn{J_2(0)}$ and $\magn{J_1'(0)} \leq b\magn{J_1(0)}$. Since $b \geq 1$, we have
\begin{align*}
	\magn{K\xi}^2 = \magn{J'(0)}^2 &= \magn{J_1'(0)}^2 + \magn{J_2'(0)}^2 \\
	&\leq b^2\magn{J_1(0)}^2 + b^2 \magn{J_2(0)}^2 \\
	&= b^2 \magn{J(0)}^2\;, 
\end{align*}
	and the result follows.
\end{proof}

\begin{proof}[Proof of (2)]
	 Let $v\in SM$ and let $\xi\in T_v(SM)$ be given, and fix a number $a\in\bb R$. For any $t\neq 0$, we have by the triangle inequality that $\magn{dg^a\xi - (dg^a\xi)_t} \leq \magn{dg^a(\xi -\xi_{t+a})} + \magn{dg^a(\xi_{t+a}) - (dg^a\xi)_t}$. If we let $\psi_t = dg^a(\xi_{t+a}) - (dg^a\xi)_t$, then $d\pi\circ dg^t \psi_t = 0$ and $d\pi\psi_t = d\pi\circ dg^a (\xi_{t+a} - \xi)$. If\linebreak $t\geq t_0 = t_0(g^av) > 0$, then by the previous lemma, $\magn{K\psi_t} \leq b\magn{d\pi\psi_t}$, where $b = b(g^a v) > 0$ is independent of $t$ and $\psi_t$. Therefore $\magn{\psi_t}^2 = \magn{K\psi_t}^2 + \magn{d\pi\psi_t}^2 \leq (1 + b^2)\magn{d\pi\circ dg^a(\xi_{t+a} -\xi)}^2 \leq (1 + b^2)\magn{dg^a(\xi_{t+a} -\xi)}^2$, and
\[
	\magn{dg^a\xi - (dg^a \xi)_t} \leq [1 + (1 + b^2)^{1/2}]\magn{dg^a(\xi_{t+a} - \xi)}
\]
	for $t\geq t_0$. If $\brak{\xi, Y(v)} = 0$, then $\brak{dg^a\xi, Y(g^av)} = 0$ by Proposition \ref{P1.7} (5). Since $a\in\bb R$ is fixed, $\xi\in X_s(v)$ if and only if $dg^a\xi\in X_s(g^av)$. This fact and (1) of this Proposition imply the similar invariance relation for $X_u$.
\end{proof}

\begin{proposition}\label{P2.7}
	Assume the Gaussian curvature $\k$ of $M$ is bounded from below, so $\k > -k^2$ for some $k > 0$, and let $J$ be a perpendicular Jacobi field on a unit speed geodesic $\g$ in $M$ such that $J(0) = 0$. Then $\magn{J'(s)} \leq k\coth (ks) \magn{J(s)}$ for every $s > 0$.
\end{proposition}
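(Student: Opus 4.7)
The plan is to work in an adapted orthonormal frame $E_1, E_2=\gamma'$ along $\gamma$ so that the perpendicular Jacobi field $J$ reads $J(s)=f(s)E_1(s)$, reducing the statement to the scalar inequality $|f'(s)| \leq k\coth(ks)\,|f(s)|$ for $s>0$, where $f$ satisfies the scalar Jacobi equation $f''+\kappa f=0$. The case $J\equiv 0$ is trivial; otherwise I may rescale so that $f'(0)>0$, and since $f(0)=0$, the section's standing no-conjugate-points hypothesis gives $f(s)>0$ for every $s>0$. Setting $u(s):=f'(s)/f(s)$, which is smooth on $(0,\infty)$, the goal becomes the two-sided bound $-k\coth(ks)\leq u(s)\leq k\coth(ks)$.

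For the upper bound I would use a Wronskian comparison with the hyperbolic model $p(s):=\sinh(ks)/k$, which solves $p''-k^2p=0$ with $p(0)=0$, $p'(0)=1$, and satisfies $p'/p=k\coth(ks)$. The Wronskian $W:=f'p-fp'$ satisfies $W'=-(k^2+\kappa)\,fp$ by direct computation from the two ODEs; since $k^2+\kappa>0$ by hypothesis and $f,p>0$ on $(0,\infty)$, we get $W'<0$. Combined with $W(0)=0$, this forces $W<0$ on $(0,\infty)$, which rearranges to $u=f'/f<p'/p=k\coth(ks)$.

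For the lower bound I would prove the cleaner inequality $u(s)>-k$ on $(0,\infty)$ via a Riccati blow-up argument; since $k\coth(ks)>k$ for $s>0$, this implies $u>-k>-k\coth(ks)$. The function $u$ satisfies the Riccati equation $u'=-\kappa-u^2$. Suppose toward contradiction that $u(s_1)<-k$ at some $s_1>0$ (the boundary case $u=-k$ reduces to this one, since the Riccati equation then forces $u'=-\kappa-k^2<0$, pushing $u$ strictly below $-k$ immediately after). Let $w$ solve the constant-coefficient Riccati $w'=k^2-w^2=(k-w)(k+w)$ with $w(s_1)=u(s_1)$; separation of variables shows any solution starting strictly below $-k$ blows down to $-\infty$ in some finite time $s^*<\infty$. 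Meanwhile, since $-\kappa\leq k^2$, the difference $d:=u-w$ satisfies $d'+(u+w)d\leq 0$ with $d(s_1)=0$, and a Gronwall argument with integrating factor $\exp\bigl(\int_{s_1}^{s}(u+w)\,d\tau\bigr)$ yields $u\leq w$ on $[s_1,s^*)$; hence $u\to-\infty$ in finite time. But a standard Gronwall estimate on $f^2+(f')^2$, using $|f''|\leq(\sup_{[s_1,s^*]}|\kappa|)\,|f|$ on the compact interval, keeps $f$ and $f'$ bounded there, so $u=f'/f$ can only diverge to $-\infty$ through $f\to 0^+$. That produces a point in $(s_1,s^*]$ conjugate to $\gamma(0)$, contradicting the standing hypothesis; hence $u>-k$ on $(0,\infty)$.

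I expect the main obstacle to be the lower bound. The symmetric Wronskian attempt---replacing $\sinh(ks)/k$ by $e^{-ks}$---produces a quantity that is only known to decrease from a positive value, which cannot control $f'/f$ from below. The decisive observation is that the cleaner bound $u>-k$ suffices (since $-k\coth(ks)<-k$), and that this in turn is exactly what the no-conjugate-points hypothesis enforces, via the finite-time blow-up of solutions to $w'=k^2-w^2$ starting strictly below $-k$.
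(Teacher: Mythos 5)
Your proof is correct, and it diverges from the paper's in a way worth noting. The paper factors the estimate entirely through properties of the scalar Riccati equation: Lemma~\ref{Green21} gives $|u|\leq k$ for solutions defined on all of $\mathbb R$, and Lemma~\ref{L2.8} refines the upper bound to $u(s)\leq k\coth(ks)$ for solutions defined on $(0,\infty)$ by comparing with $v(s)=k\coth(ks-d)$ for small $d>0$ and letting $d\to0$; Proposition~\ref{P2.7} then simply plugs in $u=a'/a$. Your upper bound instead uses a Wronskian (Sturm) comparison of $f$ with the model solution $p(s)=\sinh(ks)/k$: since $W=f'p-fp'$ satisfies $W'=-(\kappa+k^2)fp<0$ with $W(0)=0$, you get $f'/f<p'/p=k\coth(ks)$ in one stroke, avoiding the $d\to0$ limiting argument entirely. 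Your lower bound is essentially the paper's Lemma~\ref{Green21} argument (Riccati blow-down against $w'=k^2-w^2$), just phrased directly via the no-conjugate-points hypothesis rather than ``$u$ is defined on $(0,\infty)$.'' Two minor remarks: the Gronwall estimate on $f^2+(f')^2$ is superfluous, since $f$ and $f'$ solve a linear ODE with continuous coefficients and are therefore automatically continuous, hence bounded on compacts, so $u(s)\to-\infty$ in finite time already contradicts continuity of $u=f'/f$ at $s^*$ given $f(s^*)>0$; and the lower bound could be obtained by the same Wronskian device, comparing $f$ with $q(s)=\sinh(k(s_1-s))/k$ on $(0,s_1)$ and letting $s_1\to\infty$, which would make the whole proof uniform in method. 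The paper's Riccati packaging has the advantage that Lemmas~\ref{Green21} and~\ref{L2.8} are reused later (e.g.\ in Proposition~\ref{P2.9} and the proof of $(1)\iff(6)$), whereas your Wronskian argument is more self-contained and elementary for this single proposition.
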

To show this, we first state and prove two lemmas, the first of which will also be important later.
\begin{lemma}\label{Green21}
	Let $r(s)$ be a real-valued function defined for all $s\in\bb R$ such that $r(s) \geq -k^2$ for all $s$ for some $k > 0$. If $u(s)$ is a solution of the Ricatti equation
\[
	u'(s) + u(s)^2 + r(s) = 0 \;, \tag{R}
\]
	defined everywhere, then $|u(s)| \leq k$.
\end{lemma}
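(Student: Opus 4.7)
The plan is to argue by contradiction, comparing $u$ against the explicit blow-up solutions of the autonomous Ricatti equation $w' + w^2 - k^2 = 0$. The hypothesis $r(s) \geq -k^2$ immediately gives the differential inequality
\[
u'(s) = -u(s)^2 - r(s) \leq k^2 - u(s)^2,
\]
and a direct computation shows that $w(s) = k\coth(k(s-c))$ solves $w' = k^2 - w^2$ exactly while blowing up at $s = c$ (to $+\infty$ from the right of $c$ and to $-\infty$ from the left).

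Suppose first that $u(s_0) > k$ at some $s_0 \in \bb R$. Since $k\coth(k\e)$ decreases to $k$ as $\e \to \infty$, I can pick $\e > 0$ with $k\coth(k\e) < u(s_0)$ and set $c = s_0 - \e$, $w(s) = k\coth(k(s-c))$ on $(c,\infty)$, so that $w(s_0) < u(s_0)$ and $w(s) \to +\infty$ as $s \to c^+$. Setting $\phi = u - w$ gives $\phi'(s) \leq -(u(s)+w(s))\phi(s)$ with $\phi(s_0) > 0$, so the integrating factor $\mu(s) = \exp\left(\int_{s_0}^s (u+w)\,d\tau\right) > 0$ yields $(\mu\phi)' \leq 0$; since $\mu\phi$ is positive at $s_0$ and non-increasing, it remains positive on $(c, s_0]$, hence $u(s) \geq w(s)$ throughout that interval. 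Letting $s \to c^+$ forces $u(s) \to +\infty$, contradicting the fact that $u$ is defined (hence finite) at $s = c$.

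The case $u(s_0) < -k$ is entirely symmetric: choose $c > s_0$ and the branch of $w(s) = k\coth(k(s-c))$ on $(-\infty, c)$ matching $u$ from above at $s_0$, so $w(s) \to -\infty$ as $s \to c^-$. Running the same comparison forward from $s_0$ now gives $u(s) \leq w(s) \to -\infty$ on $[s_0, c)$, contradicting continuity of $u$ at $c$. The main technical point is pairing the branch of the $\coth$ family with the side of $s_0$ on which the comparison forces $u$ to inherit $w$'s singularity; once that is arranged, the integrating-factor comparison itself is entirely standard.
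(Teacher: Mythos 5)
Your proof is correct, and while it uses the same family of $\coth$ comparison solutions as the paper, it differs in two ways that make it tidier. First, the paper compares $u$ against solutions of $v' = b^2 - v^2$ for $b > k$ \emph{strictly}, initialized so that $v(s_0) = u(s_0)$ exactly; the strict inequality $b > k$ is then needed to guarantee $v'(s_0) - u'(s_0) = b^2 + r(s_0) > 0$ (which can fail if $b = k$ and $r(s_0) = -k^2$), and the final bound is obtained by letting $b \to k^+$. You avoid that extra parameter and the limiting step by comparing directly against the $k$-level solution $w(s) = k\coth(k(s-c))$, but initialized with a \emph{strict} offset $w(s_0) < u(s_0)$ rather than equality; the strictness at $s_0$ then does the work that $b > k$ did in the paper. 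Second, you propagate the inequality via an integrating factor — writing $\phi = u - w$, observing $\phi' \leq -(u+w)\phi$, and concluding $(\mu\phi)' \leq 0$ — which is a cleaner and more airtight version of the paper's informal first-crossing argument ("if $v(s_1) = u(s_1)$ then $v'(s_1) - u'(s_1) > 0$, so the inequality persists"). Both approaches reach the same blow-up contradiction at the singularity $s = c$; yours requires slightly less bookkeeping and no $b \to k$ limit, at the modest cost of invoking the integrating-factor machinery.
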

\begin{proof}
	This proof is an adaptation of Lemma 2.1 in \cite{GreenConjugate}. Let $b>k$ and suppose there exists $s_0$ with $u(s_0) > b$. Then there is a solution $v(s)$ of
\[
	v'(s) = b^2 - v(s)^2
\]
	for which $v(s_0) = u(s_0)$. For a suitably chosen constant $d$, we have $v(s) = b\coth (bs - d)$. Subtracting the above by (R), we have
\[
	v'(s) - u'(s) = b^2 + r(s) - v(s)^2 + u(s)^2 \; .
\]
	For $s = s_0$, we have $v'(s_0) - u'(s_0) = b^2 + r(s) > 0$, so
\[
	v(s) \leq u(s)
\]
	in at least a left-sided neighborhood $s_0-\e \leq s\leq s_0$ of $s_0$. This, however, implies that the inequality holds for $s \leq s_0$, since for any $s_1$ where $v(s_1) = u(s_1)$, we have that $v'(s_1) - u'(s_1) > 0$, and so $v(s) \leq u(s)$ for a left-sided neighborhood of $s_1$. Now if we observe the behavior of $v(s)$, we notice that $\lim_{s \to d/b^+} v(s) = \infty$, so $\lim_{s\to d/b^+} u(s) = \infty$, but this contradicts the fact that $u(s)$ is defined everywhere. Since $b$ can be taken arbitrarily close to $k$, we have $u(s) \leq k$.
	
	For the lower bound, suppose there exists $s_0$ with $u(s_0) < -b$. We then consider $w(s) - u(s)$, where $w(s) = b\coth (bs - c)$ with $c$ chosen such that $w(s_0) = u(s_0)$. Using arguments similar to the above, we have $w'(s_0) - u'(s_0) > 0$, so $w(s) \geq u(s)$ for a right-sided neighborhood $s_0 \leq s \leq s_0 + \e$ of $s_0$, and hence for all $s \geq s_0$. Since $\lim_{s\to c/b^-} v(s) = -\infty$, we also have $\lim_{s\to c/b^-} u(s) = -\infty$, which again contradicts the fact that $u(s)$ is defined everywhere. Thus $u(s) \geq -k$.
\end{proof}

\begin{lemma}\label{L2.8}
	If $u(s)$ is a solution of the Ricatti equation (R) defined for all $s > 0$, then $\abs{u(s)} \leq k\coth (ks)$ for all $s > 0$.
\end{lemma}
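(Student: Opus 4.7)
The plan is to compare $u$ with explicit singular solutions of the worst-case Riccati equation $v' + v^2 - b^2 = 0$ for each $b > k$. Such solutions have the form $v(s) = b\coth(b(s-c))$ and blow up at $s = c$; by choosing $c$ appropriately, I will place this singularity strictly inside $(0,\infty)$, forcing $u$ to blow up there as well and contradicting the hypothesis that $u$ is defined (hence continuous) on all of $(0,\infty)$. After establishing $|u(s)| \leq b\coth(bs)$ for every $b > k$, the desired inequality will follow by letting $b \to k^+$.

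For the upper bound, suppose toward contradiction that $u(s_0) > k\coth(ks_0)$ at some $s_0 > 0$, and pick $b > k$ with $u(s_0) > b\coth(bs_0)$. Define $v(s) = b\coth(b(s-c))$ on $(c,\infty)$ with $c$ chosen so that $v(s_0) = u(s_0)$; monotonicity of $\coth$ on $(0,\infty)$ forces $0 < c < s_0$. At any point where $u = v$, a direct computation gives
\[
	(u-v)' = (-u^2 - r) - (b^2 - v^2) = -(r + b^2),
\]
which is \emph{strictly} negative since $r \geq -k^2 > -b^2$. Adapting the one-sided crossing argument used in the proof of Lemma \ref{Green21}, this sign forces $u > v$ throughout $(c,s_0)$. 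Since $v(s) \to +\infty$ as $s \to c^+$, the function $u$ would have to blow up at $c$, contradicting its continuity at the interior point $c > 0$.

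The lower bound is symmetric. If $u(s_0) < -k\coth(ks_0)$, pick $b > k$ with $u(s_0) < -b\coth(bs_0)$ and compare against the branch $v(s) = b\coth(b(s-c))$ on $(-\infty, c)$, where $v$ decreases from $-b$ to $-\infty$ as $s \to c^-$. Using the identity $-b\coth(bs_0) = b\coth(-bs_0)$ and the monotonicity of $\coth$ on $(-\infty,0)$, the condition $v(s_0) = u(s_0) < -b\coth(bs_0)$ places the pole at $c$ with $s_0 < c < 2s_0$, which is still strictly inside $(0,\infty)$. The analogous calculation gives $(v-u)' = r + b^2 > 0$ at any crossing, so $v > u$ on $(s_0, c)$, and then $v(s) \to -\infty$ as $s \to c^-$ forces $u(s) \to -\infty$, contradicting the finiteness of $u(c)$.

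The main obstacle is maintaining a \emph{strict} sign at crossings, which is what prevents $u$ from merely touching the comparison curve and peeling away afterwards. This strictness is precisely what is lost if one tries to compare directly against $k\coth(ks)$, and is the reason for the two-step approach of proving the sharper estimate for each $b > k$ and only then passing to the limit $b \to k^+$.
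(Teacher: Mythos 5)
Your proof is correct and uses the same fundamental machinery as the paper --- comparison against explicit $\coth$-type solutions of the extremal Riccati equation, driven by a first-crossing argument --- but the execution differs in ways worth noting. The paper compares $u$ against $v(s) = k\coth(ks-d)$ for a shift $d > 0$ (a solution of the $b=k$ equation $v' + v^2 = k^2$), proves $u < v$ \emph{forward} in $s$ from an arbitrary starting point $s_0$, and only afterward sends $s_0 \to 0^+$ (via $s_0 = 1/n$, $d_n < k/n$) to remove the shift; you instead take a strictly larger curvature bound $b > k$, place the pole $c$ of $v(s) = b\coth(b(s-c))$ directly inside $(0,\infty)$, argue \emph{toward} the singularity, and defer only the harmless limit $b \to k^+$ to the very end. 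The more substantive difference is your use of strictness: the paper's crossing computation gives $f'(s_1) = -r(s_1) - k^2$, which under the hypothesis $r \geq -k^2$ is only $\leq 0$, so the paper's assertion that $f'(s_1) < 0$ is not quite justified in the borderline case $r(s_1) = -k^2$; your choice $b > k$ buys the strict sign $-(r+b^2) \leq k^2 - b^2 < 0$ automatically, so your crossing argument is watertight with no extra hypothesis. Finally, you obtain the lower bound by the same symmetric pole-placement argument on the negative branch of $\coth$, whereas the paper simply re-invokes the cruder estimate $u \geq -k$ from Lemma \ref{Green21} and observes $-k \geq -k\coth(ks)$; both routes are valid, though the paper's is shorter and yours is more uniform.
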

\begin{proof}
	The first part of this proof is an adaptation of Lemma 3 in \cite{GreenHopf}. For fixed $s_0 > 0$, choose a number $d > 0$ such that $u(s_0) < k\coth (ks_0 - d)$. We show that $u(s) < k\coth(ks-d)$ for $s \geq s_0$. We have $v(s) = k\coth (ks-d)$ is a solution to the equation
\[
	v'(s) + v(s)^2 - k^2 = 0 \;,
\]
	defined for $s > d/k > 0$. If $f(s) = u(s) - v(s)$, then $f(s_0) < 0$. If $f(s) = 0$ for some $s > s_0$, let $s_1$ be the first such value. Differentiating at $s_1$, we obtain
\[
	f'(s_1) = v(s_1)^2 - u(s_1)^2 - r(s_1) - k^2 = -r(s_1) - k^2 < 0 \; .
\]
	This means that $f(s) > 0$ in some left-sided neighborhood $s_1 - \e < s < s_1$ of $s_1$, but since $s_0 < s_1$ and $f(s_0) < 0$, this contradicts the fact that $s_1$ is the first point after $s_0$ for which $f(s) = 0$. Thus $u(s) < k\coth(ks-d)$ for all $s \geq s_0$.

	Next, for each integer $n > 0$, choose a number $d_n > 0$ such that $u(1/n) < k\coth (k/n - d_n)$. If $s > 0$ is given, then $s > 1/n$ for sufficiently large $n$, and the above argument shows that $u(s) < k\coth (ks-d_n)$. Since $d_n < k/n$, $d_n\to 0$ and we have $u(s) \leq k\coth (ks)$. Finally, the argument in the last paragraph of the proof of Lemma \ref{Green21} works if $u(s)$ is defined only for $s > 0$; we assume $s_0 > 0$ and contradict the fact that $u(s)$ is defined for $s>0$, which then gives us that $u(s) \geq -k$ for any $s > 0$, completing the proof.
\end{proof}

\begin{proof}[Proof of Proposition \ref{P2.7}]
	Let $\g$ be a unit speed geodesic in $M$. Relative to an adapted frame field along $\g$, we consider the solution $a(s)$ to equation (J) defined earlier in this section. Let $J$ be a perpendicular Jacobi field on $\g$ such that $J(0) = 0$. We may assume $J'(0) \neq 0$, since otherwise there is nothing to prove, and by linearity it suffices to consider the case where $\magn{J'(0)} = 1$. Since $a(s)$ is the unique solution to (J) with $a(0) = 0$ and $a'(0) = 1$, we have that $J(s) = \pm a(s) E_1(s)$. If $u(s) = a'(s)a(s)^{-1}$, then $u(s)$ is defined for $s > 0$ and is a solution to the Ricatti equation
\[
	u'(s) + u(s)^2 + \k(s) = 0 \; .
\]
	Applying the lemma we just proved, we have
\[
	\magn{J'(s)} = \abs{a'(s)} = \abs{u(s)a(s)} = \abs{u(s)}\abs{a(s)} = \abs{u(s)}\magn{J(s)} \leq k\coth (ks) \magn{J(s)} \; .
\]
\end{proof}

\begin{proposition}\label{P2.9}
	Assume the Gaussian curvature $\k$ of $M$ is bounded from below, so $\k > -k^2$ for some $k > 0$. Let $\g$ be a unit speed geodesic in $M$. For any $R > 0$ we can find a number $T = T(R,\g) > 0$ such that $\magn{J(s)} \geq R\magn{J'(0)}$ for $s\geq T$, where $J$ is any perpendicular Jacobi field on $\g$ such that $J(0) = 0$.
\end{proposition}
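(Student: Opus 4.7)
My plan is to reduce the statement to a single claim: the fundamental solution $a(s)$ of (J) introduced earlier in this section satisfies $a(s)\to\infty$ as $s\to\infty$. Given this, working in an adapted frame $E_1,E_2$ along $\g$, every perpendicular Jacobi field $J$ with $J(0)=0$ takes the form $J(s)=ca(s)E_1(s)$, and since $E_1$ is parallel we get $\magn{J'(0)}=\abs c$; hence $\magn{J(s)}=\magn{J'(0)}a(s)$ for $s>0$, and it would suffice to choose $T=T(R,\g)$ so that $a(s)\geq R$ for $s\geq T$.

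To prove $a(s)\to\infty$, I would argue by contradiction, combining the Ricatti bound from Lemma \ref{L2.8} with the explicit formula for $d(s)$ from Remark \ref{dexplicit}. Since $M$ has no conjugate points, $u(s)=a'(s)/a(s)$ is a solution of (R) on $(0,\infty)$, so $\abs{u(s)}\leq k\coth(ks)$; in particular $\abs{u(s)}\leq N$ on $[1,\infty)$ where $N=k\coth k$. Since $u=(\log a)'$, this integrates to the Lipschitz estimate
\[
	\abs{\log a(s)-\log a(s')}\leq N\abs{s-s'}\quad\text{for all }s,s'\geq 1\,.
\]

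Now assume for contradiction that $a(s)\not\to\infty$. Then there exist $C>0$ and a sequence $s_n\to\infty$, which after thinning I may take to satisfy $s_n\geq 2$ and $s_{n+1}>s_n+1$, with $a(s_n)\leq C$ for every $n$. The Lipschitz estimate gives $a(s)\leq Ce^{N/2}$ on each of the disjoint intervals $[s_n-1/2,s_n+1/2]$, so $\int_{s_n-1/2}^{s_n+1/2}a(u)^{-2}\,du\geq C^{-2}e^{-N}$, and summing over $n$ yields $\int_1^\infty a(u)^{-2}\,du=\infty$. But Remark \ref{dexplicit} gives $d(1)=a(1)\int_1^\infty a(u)^{-2}\,du$, and $d(1)$ is a finite real number by Proposition \ref{Hao1}; this is the desired contradiction. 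Hence $a(s)\to\infty$, and the proposition follows.

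The main obstacle is that the Ricatti bound from Lemma \ref{L2.8} alone does not force $a(s)\to\infty$: a priori $a$ could oscillate between small and large values while its logarithmic derivative $u$ remains uniformly bounded. What closes the argument is the integrability $\int_1^\infty a(u)^{-2}\,du<\infty$ supplied by the existence of the stable Jacobi field $d$ from Proposition \ref{Hao1}; coupled with the Lipschitz control on $\log a$, each putative bounded-value point $s_n$ thickens to an interval of fixed length over which $a^{-2}$ is uniformly bounded below, so that infinitely many such points force the integral to diverge.
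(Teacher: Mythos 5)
Your proof is correct, and it takes a genuinely different route from the paper's. Both arguments draw on the same two ingredients --- the Ricatti bound from Lemma \ref{L2.8} on $u=a'/a$, and the convergence of $\int_s^\infty a(u)^{-2}\,du$ established in Proposition \ref{Hao1} / Remark \ref{dexplicit} --- but they combine them differently. You argue by contradiction: assuming $a$ has a bounded subsequence, the Lipschitz control $\abs{(\log a)'}\leq N$ on $[1,\infty)$ thickens each bounded-value point to an interval on which $a^{-2}$ is bounded below, forcing the integral $\int_1^\infty a^{-2}$ to diverge. The paper instead works directly and quantitatively: writing $m(s)=\int_s^\infty a(u)^{-2}\,du$, it observes the algebraic identity $u(s)-v(s)=a(s)^{-2}m(s)^{-1}$ for the two Ricatti solutions $u=a'/a$ and $v=d'/d$, bounds $\abs{u}+\abs{v}\leq 4k$ for $s$ past a fixed $s_0$ using Lemma \ref{L2.8}, and reads off $\abs{a(s)}\geq (4k\,\abs{m(s)})^{-1/2}$, after which $m(s)\to 0$ finishes the proof. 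The paper's route yields an explicit lower bound on $\abs{a(s)}$ in terms of $m(s)$ and hence a constructive choice of $T$, while yours is shorter in spirit but nonconstructive; the key extra observation in the paper is the Wronskian-type identity relating the logarithmic derivatives of $a$ and $d$ to $a^{-2}m^{-1}$, which replaces your thickening step. One small point worth noting: your reduction correctly handles $J'(0)=0$ implicitly (the inequality is then vacuous), and your use of $d(1)=a(1)\int_1^\infty a(u)^{-2}\,du$ being finite is already justified because Proposition \ref{Hao1}(1) establishes convergence of that integral before you invoke it, so there is no circularity.
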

\begin{proof}
	It suffices to consider the case where $\magn{J'(0)} = 1$. As before, we have $J(s) = \pm a(s) E_1(s)$ relative to an adapted frame field $E_1, E_2$ along $\g$ with $E_2(s) = \g'(s)$. Thus we want to show that $\abs{a(s)} \geq R$ for $s\geq T$.
	
	For any $s > 0$, let $m(s) = d'(0) - d_s'(0) = \displaystyle \int_s^\infty a(u)^{-2} du$, and let $u(s) = a'(s)a(s)^{-1}$ and\linebreak $v(s) = d'(s)d(s)^{-1}$. Then $u(s)$ and $v(s)$ are solutions defined for $s>0$ of the Ricatti equation $u'(s) + u(s)^2 + \k(s) = 0$. Since $d(s) = a(s)m(s)$, we have $d'(s) = a'(s)m(s) + a(s)m'(s) = a'(s)m(s) - a(s)^{-1}$, and consequently
\[
	u(s) - v(s) = a(s)^{-2} m(s)^{-1}  \; .
\]
	Choosing $s_0 > 0$ such that $k\coth ks \leq 2k$ for $s \geq s_0$, we have by Lemma \ref{L2.8} that
\[
	\abs{a(s)^{-2} m(s)^{-1}} \leq \abs{u(s)} + \abs{v(s)} \leq 4k 
\]
	for $s \geq s_0$, or in other words, that
\[
	\abs{a(s)} = 1/\abs{a(s)^{-1}} \geq (4k\abs{m(s)})^{-1/2}
\]
	for $s \geq s_0$. Let $R > 0$ be given. Since $m(s) \to 0$ as $s \to \infty$, we may choose $T > s_0 > 0$ such that $s\geq T$ implies $\abs{m(s)} \leq 1/(4kR^2)$, hence giving us $\abs{a(s)} \geq R$ for $s \geq T$.
\end{proof}

\begin{proposition}\label{P2.11}
	Assume that $\k > -k^2$ for some $k > 0$. Then for any $v\in SM$ and any $\xi\in X_s(v)$ or $X_u(v)$ we have $\magn{K\xi} \leq k\magn{d\pi\xi}$.
\end{proposition}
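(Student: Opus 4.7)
The plan is to reduce the inequality $\magn{K\xi} \leq k\magn{d\pi\xi}$ to the single estimate $|d'(0)| \leq k$ (and its analogue for $\bar d$), and then to extract that estimate from Lemma \ref{Green21} applied to the logarithmic derivative of $d$ (respectively $\bar d$). The crucial point is that $d$ is defined and nonvanishing on all of $\bb R$, so its logarithmic derivative is a globally defined Riccati solution, which is exactly the hypothesis of Lemma \ref{Green21} (as opposed to the weaker one-sided estimate in Lemma \ref{L2.8}).

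For the first step, fix $\xi\in X_s(v)$ and choose an adapted frame $E_1,E_2$ along $\g_v$ with $E_2 = \g_v'$. By Remark \ref{HaoJacobi}, $J_\xi(s) = c\,d(s)\,E_1(s)$ for some $c\in\bb R$, and evaluating at $s=0$ gives $d\pi\xi = cE_1(0)$, so $|c| = \magn{d\pi\xi}$. Since $E_1$ is parallel along $\g_v$, $J_\xi'(0) = c\,d'(0)\,E_1(0)$; using $K\xi = J_\xi'(0)$ from Definition \ref{D1.6} yields $\magn{K\xi} = |d'(0)|\,\magn{d\pi\xi}$. Thus it suffices to show $|d'(0)| \leq k$.

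For the second step, define $u(s) = d'(s)/d(s)$. This is defined for every $s\in\bb R$ because $d$ is nowhere zero by part (3) of Proposition \ref{Hao1}. A direct differentiation, using the Jacobi equation $d''(s) + \k(s) d(s) = 0$, shows that $u$ satisfies the Riccati equation $u'(s) + u(s)^2 + \k(s) = 0$ on all of $\bb R$. Since $\k(s) > -k^2$, Lemma \ref{Green21} gives $|u(s)| \leq k$ for every $s$; taking $s=0$ produces $|d'(0)| = |u(0)| \leq k$, as needed.

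The case $\xi\in X_u(v)$ is handled by running the identical argument with $\bar d$ in place of $d$, invoking the subsequent corollary (part (3) of Corollary \ref{Hao1}) to guarantee that $\bar d$ is nowhere vanishing on $\bb R$. The one step that requires any real thought is recognizing that the \emph{global} two-sided Riccati bound of Lemma \ref{Green21} — not the one-sided Lemma \ref{L2.8} — is the right tool here, and that the entire construction of $d$ and $\bar d$ in Proposition \ref{Hao1} and its corollary was tailored precisely to make that global hypothesis available. Everything else is a routine translation between the Jacobi and Riccati pictures.
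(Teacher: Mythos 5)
Your proof is correct, and it takes a genuinely different and arguably cleaner route than the paper's.

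The paper's argument passes through the approximating vectors $\xi_t$: it writes $J_{\xi_t}(s) = c\,d_t(s)E_1(s)$, reverses and shifts the geodesic so that the Jacobi field vanishes at the new origin, invokes Proposition~\ref{P2.7} (which in turn rests on the one-sided Riccati estimate of Lemma~\ref{L2.8} applied to $a'/a$) to get $\magn{K\xi_t} \leq k\coth(kt)\,\magn{d\pi\xi}$, and finally lets $t \to \infty$, using $\coth(kt)\to 1$. You instead work directly with the limiting object $J_\xi = c\,d(s)E_1(s)$ and observe that because $d$ is nowhere vanishing on all of $\bb R$ (Proposition~\ref{Hao1}~(3)), the logarithmic derivative $u = d'/d$ is a Riccati solution defined on the whole line, which is exactly the hypothesis of the global two-sided bound in Lemma~\ref{Green21}; evaluating at $s=0$ gives $|d'(0)|\leq k$ at once. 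What your route buys is the elimination of both the limiting argument and the reverse-and-shift maneuver, and it makes explicit why the nonvanishing of $d$ and $\bar d$ on all of $\bb R$ matters. What the paper's route buys is that it never uses the global nonvanishing of $d$ directly; it only needs $d_t$, which is elementary, and Proposition~\ref{P2.7}, which is a self-contained statement about Jacobi fields vanishing at a point. Both are sound; yours is the more economical once Proposition~\ref{Hao1} is in hand, and it is a nice illustration of why that proposition was worth proving.
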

\begin{proof}
	First let $\xi\in X_s(v)$. Relative to an adapted frame field along $\g$, we have $\xi$ corresponds to $J_\xi = cd(s) E_1(s)$ for some $c\in\bb R$. Further, for $t\neq 0$, $\xi_t$ corresponds to $J_{\xi_t}(s) = cd_t(s) E_1(s)$. By reversing our geodesic and shifting by $t$, i.e., by considering the geodesic $\g(t-s)$ instead of $\g(s)$, we have the corresponding Jacobi field $L_{\xi_t}(s) = J_{\xi_t}(t-s) = d_t(t-s)E_1(t-s)$. In particular, we have $L_{\xi_t}(0) = J_{\xi_t}(t) = 0$, so by Proposition \ref{P2.7}, we have
\[
	\magn{L_{\xi_t}'(s)} \leq k\coth (ks) \magn{L_{\xi_t}(s)}
\]
	for $s > 0$. Setting $s = t$, we have
\[
	\magn{J_{\xi_t}'(0)} = \magn{L_{\xi_t}'(t)} \leq k\coth (kt) \magn{L_{\xi_t}(t)} = k\coth(kt) \magn{J_{\xi_t}(0)} \; .
\]
	By our isomorphism, we have
\[
	\magn{K\xi_t} \leq k\coth (kt) \magn{d\pi(\xi_t)} = k\coth (kt) \magn{d\pi\xi} \;.
\]
	Taking $t\to\infty$ yields the desired result. A similar approach shows the case where $\xi\in X_u(v)$.
\end{proof}

\begin{proposition}\label{P2.12}
	Assume that $\k > -k^2$ for some $k>0$. Let $v\in SM$ and let $\xi\in T_v(SM)$ be such that $\brak{\xi, Y(v)} = 0$ and $\magn{d\pi\circ dg^t\xi}$ is bounded above for all $t\geq 0$ (resp. for all $t\leq 0$). Then $\xi\in X_s(v)$ (resp. $\xi\in X_u(v)$).
\end{proposition}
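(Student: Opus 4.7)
The plan is to exploit the explicit description of perpendicular Jacobi fields along $\g_v$ in terms of the fundamental solutions $a(s)$ and $d(s)$ introduced earlier in this section. First I would reduce to a scalar problem using an adapted frame field $E_1, E_2$ along $\g_v$: since $\brak{\xi, Y(v)} = 0$, Proposition \ref{P1.7} (4) gives $\brak{J_\xi(s), \g_v'(s)} \equiv 0$, so $J_\xi(s) = y(s) E_1(s)$ where $y$ satisfies the Jacobi equation (J). By Proposition \ref{P1.7} (2), $\abs{y(t)} = \magn{J_\xi(t)} = \magn{d\pi \circ dg^t \xi}$ is bounded above for $t \geq 0$.

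Next I would write $y$ in the basis $\set{a, d}$ of solutions to (J), which are linearly independent since $W(a,d)(0) = a'(0) d(0) - a(0) d'(0) = 1$. Thus $y(s) = \a a(s) + \b d(s)$ with $\b = y(0)$. For $s > 0$, Remark \ref{dexplicit} gives $d(s) = a(s) m(s)$ where $m(s) = \int_s^\infty a(u)^{-2} du$, so
\[
	y(s) = a(s)(\a + \b m(s)) \; .
\]

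The contradiction then follows by inspecting asymptotics. As $s \to \infty$, $m(s) \to 0$ since the integral converges by Proposition \ref{Hao1}. Applying Proposition \ref{P2.9} to the perpendicular Jacobi field $a(s) E_1(s)$, which satisfies $a(0) = 0$ and $\magn{a'(0) E_1(0)} = 1$, we get $\abs{a(s)} \to \infty$. If $\a \neq 0$, then $\abs{y(s)} = \abs{a(s)}\abs{\a + \b m(s)} \to \infty$, contradicting the boundedness of $y$. Hence $\a = 0$, so $J_\xi(s) = \b d(s) E_1(s)$, placing $J_\xi \in J_s(\g_v)$ by Remark \ref{HaoJacobi} (1), i.e., $\xi \in X_s(v)$. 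The case $t \leq 0$ is handled symmetrically by using $\bar d$ from Corollary \ref{Hao1} in place of $d$, applied to the reverse geodesic $s \maps \g_v(-s)$. No step looks genuinely difficult; the only conceptual point is recognizing the factorization $y(s) = a(s)(\a + \b m(s))$ as a clean separation between the unbounded mode $a(s)$ and the bounded mode $d(s)$, after which the argument is immediate.
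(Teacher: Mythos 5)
Your proof is correct, and it takes a somewhat different route from the paper's, though both hinge on the same key lemma (Proposition~\ref{P2.9}). The paper verifies the defining property $\xi_t \to \xi$ directly: it forms $\xi - \xi_t$, notes $J_{\xi-\xi_t}(0)=0$, and uses Proposition~\ref{P2.9} to conclude $\magn{\xi - \xi_t} = \magn{J_{\xi-\xi_t}'(0)} \leq \magn{J_\xi(t)}/n \leq A/n$ for $t$ large, so $\xi_t \to \xi$. You instead exploit the two-dimensional scalar picture: expand $y$ in the basis $\{a,d\}$ (linearly independent since $W(a,d)\equiv 1$), factor via $d = a\cdot m$ with $m(s)\to 0$, and use Proposition~\ref{P2.9} to get $\abs{a(s)}\to\infty$, forcing the $a$-coefficient to vanish; then $J_\xi = \b\, d\, E_1 \in J_s(\g_v)$, hence $\xi \in X_s(v)$ by the equality $J_s(\g) = \set{c\,d(s)E_1(s)}$ from Remark~\ref{HaoJacobi}(1). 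Your argument is arguably cleaner for surfaces since it makes the mechanism transparent --- bounded solutions of (J) can have no $a$-component --- but it does lean on the characterization of $J_s(\g)$ as exactly $\set{c\,d(s)E_1(s)}$ rather than just working from the definition of $X_s(v)$ as the paper does; one should note that this characterization is an equality (both directions are implicit in the remark but worth keeping in mind). Your handling of the $t\leq 0$ case by reversing the geodesic and invoking Corollary~\ref{Hao1} is at the same level of detail as the paper's ``the other case works similarly,'' so that is fine.
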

\begin{proof}
	Suppose $\magn{d\pi\circ dg^t \xi} \leq A$ for some $A>0$. For any $t > 0$, $\xi_t\in T_v(SM)$ by Proposition \ref{P1.7} (3), since $J_{\xi_t}$ is perpendicular at 0 and $t$ and hence everywhere. Thus $\xi-\xi_t \in T_v(SM)$. By linearity, $J_{\xi-\xi_t} (0) = J_\xi(0) - J_{\xi_t}(0) = 0$ and $J_{\xi-\xi_t}(t) = J_\xi(t)$, which is perpendicular by assumption, so $J_{\xi-\xi_t}$ is also perpendicular everywhere. By Proposition \ref{P2.9}, for each integer $n > 0$ we have
\[
	\magn{\xi-\xi_t} = \magn{K(\xi-\xi_t)} = \magn{J_{\xi-\xi_t}'(0)} \leq \magn{J_{\xi-\xi_t}(s)}/n
\]
	for any $s\geq T(n, \g)$. Thus for $t \geq T(n, \g)$, 
\[
	\magn{\xi-\xi_{t}} \leq \magn{J_{\xi-\xi_{t}}(t)}/n = \magn{J_{\xi}(t)}/n = \magn{d\pi\circ dg^{t} \xi}/n \leq A/n \; ,
\]
	so $\xi_{t} \to \xi$ as $t\to\infty$, as desired. The proof for the other case works similarly.
\end{proof}

\begin{proposition}\label{P2.13}
	Assume that $\k > -k^2$ for some $k>0$. Suppose that there exist constants $A > 0$ and $s_0 \geq 0 $ so that for any perpendicular Jacobi vector field $J$ such that $J(0) = 0$ and for any numbers $t\geq s\geq s_0$ we have $\magn {J(t)} \geq A\magn{J(s)}$.  Then for any $v\in SM$ and any $\xi\in T_v(SM)$, $\xi\in X_s(v)$ (resp. $\xi\in X_u(v)$) if and only if $\brak{\xi,Y(v)} = 0$ and $\magn{d\pi\circ dg^t \xi}$ is bounded above for all $t \geq 0$ (resp. for all $t \leq 0$). In either case, $(1/A)\magn{d\pi\xi}$ is an upper bound.
\end{proposition}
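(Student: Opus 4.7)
The reverse direction --- that $\brak{\xi, Y(v)} = 0$ together with boundedness of $\magn{d\pi\circ dg^t\xi}$ on $t \geq 0$ (resp.\ $t\leq 0$) implies $\xi\in X_s(v)$ (resp.\ $X_u(v)$) --- is exactly the content of Proposition \ref{P2.12}, whose sole standing assumption is the lower curvature bound $\k > -k^2$. So my plan is to focus on the forward direction while simultaneously extracting the explicit upper bound $(1/A)\magn{d\pi\xi}$.

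Let $\xi\in X_s(v)$; then $\brak{\xi, Y(v)} = 0$ by definition. By Remark \ref{HaoJacobi}, relative to an adapted frame $E_1, E_2$ along $\g_v$, the associated Jacobi field has the form $J_\xi(s) = cd(s)E_1(s)$ with $|c| = \magn{d\pi\xi}$ (using $d(0) = 1$). Since $\magn{d\pi\circ dg^t\xi} = \magn{J_\xi(t)} = |c|\,|d(t)|$ by Proposition \ref{P1.7}(2), the task reduces to proving $|d(t)| \leq 1/A$ for all $t \geq 0$; note that $d > 0$ everywhere by continuity together with $d(0) = 1$ and Proposition \ref{Hao1}(3).

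To exploit the hypothesis, I would apply it on the reversed geodesic. For each $T > s_0$, let $\tilde\g(u) = \g_v(T-u)$ and consider the perpendicular Jacobi field $L_T(u) = d_T(T-u)E_1(T-u)$ along $\tilde\g$. Since $d_T(T) = 0$, we have $L_T(0) = 0$, so the hypothesis applies: for any $s' \in [s_0, T]$,
\[
	\magn{L_T(T)} \geq A\magn{L_T(s')} \quad\Longleftrightarrow\quad 1 \geq A\,d_T(T-s') \; ,
\]
where positivity of $d_T$ on $(-\infty, T)$ follows from $d_T(0) = 1$ and the no-conjugate-points assumption. Setting $\tau = T - s'$ gives $d_T(\tau) \leq 1/A$ for $0 \leq \tau \leq T - s_0$. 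Fixing $\tau$ and letting $T\to\infty$, Proposition \ref{Hao1}(2) yields $d(\tau)\leq 1/A$ for all $\tau\geq 0$, which is the desired bound. The unstable case follows either by repeating the argument with $\bar d$ and $-\infty$ in place of $d$ and $+\infty$, or by invoking Proposition \ref{P2.4}(1) to transfer the stable result to the unstable one via the reversal map $S$.

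The main conceptual step is realizing that the hypothesis, which controls perpendicular Jacobi fields vanishing at the \emph{initial} time, must be applied to the reversed geodesic in order to convert the endpoint condition $d_T(T) = 0$ into an initial condition; the bookkeeping in inverting the roles of $s$ and $t$ so that the inequality bounds $d_T$ near its initial value $d_T(0) = 1$ is then routine, and the passage $T\to\infty$ transports the finite-time bound to the limiting Jacobi field $d$.
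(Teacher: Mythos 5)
Your proposal is correct and follows essentially the same route as the paper: both proofs reverse the geodesic so that the Jacobi field vanishing at time $t$ becomes one vanishing at time $0$, apply the hypothesis on the reversed field, and then pass to the limit $t\to\infty$ (you phrase this via $d_T\to d$ from Proposition \ref{Hao1}, the paper via $\xi_t\to\xi$, which are the same fact in different clothing). The only cosmetic difference is that you work in the scalar coordinates of Remark \ref{HaoJacobi} while the paper works directly with $J_{\xi_t}$.
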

\begin{proof}
	The reverse direction follows immediately from the preceding result. For the forward direction, let $v\in SM$ and $\xi\in X_s(v)$, and fix a number $u\geq 0$. Let $t \geq u + s_0$ and consider the Jacobi field $L_{\xi_t} (s) = J_{\xi_t}(t-s)$. Since $L_{\xi_t}(0) = 0$, we have
\[
	\magn{d\pi\circ dg^u (\xi_t)} = \magn{L_{\xi_t}(t-u)} \leq (1/A)\magn{L_{\xi_t}(t)} = (1/A) \magn{d\pi (\xi_t)} = (1/A)\magn{d\pi\xi} \; .
\]
	Since $\xi_t\to \xi$ as $t\to\infty$, we have
\[
	\magn{d\pi\circ dg^u \xi} \leq (1/A)\magn{d\pi\xi}
\]
	by continuity. The proof for the case where $\xi\in X_u(v)$ is similar.
\end{proof}

\begin{remark}\label{RConst}
	This result shows that assuming the hypothesis of Proposition \ref{P2.13} and given any unit speed geodesic $\g$ in $M$, a perpendicular Jacobi field $J$ lies in $J_s(\g)$ (resp. $J_u(\g)$) if and only if $\magn{J(t)} \leq (1/A)\magn{J(0)}$ for $t\geq 0$ (resp. $t\leq 0$). We have from Corollary \ref{HaoNoFocal} that the hypothesis is satisfied for $A=1$ and $s_0=0$.
\end{remark}

\begin{corollary}\label{C2.14}
	Let $M$ satisfy the hypothesis of Proposition \ref{P2.13} and let $B = [(1+k^2)/A^2]^{1/2}$. Then for any $v\in SM$ and any $\xi\in T_v(SM)$, $\xi \in X_s(v)$ (resp. $X_u(v)$) if and only if $\brak{\xi, Y(v)} = 0$ and $\magn{dg^t\xi}$ is bounded above for all $t\geq 0$ (resp. $t\leq 0$). In either case, $B\magn{\xi}$ is an upper bound.
\end{corollary}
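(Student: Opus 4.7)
The plan is to bootstrap from Proposition \ref{P2.13}, which gives the analogous characterization in terms of $\magn{d\pi\circ dg^t\xi}$, by also controlling the vertical component $\magn{K\circ dg^t\xi}$. The key identity is Remark \ref{R1.8}, which gives the orthogonal Sasaki decomposition
\[
	\magn{dg^t\xi}^2 = \magn{d\pi\circ dg^t\xi}^2 + \magn{K\circ dg^t\xi}^2,
\]
together with the inequality $\magn{d\pi\xi}^2 \leq \magn{\xi}^2$ (the same decomposition at $t=0$). This reduces the statement to controlling the horizontal and vertical pieces separately.

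For the $(\Leftarrow)$ direction, boundedness of $\magn{dg^t\xi}$ immediately forces boundedness of $\magn{d\pi\circ dg^t\xi}$, so $\xi\in X_s(v)$ follows directly from Proposition \ref{P2.13}. For the $(\Rightarrow)$ direction, suppose $\xi\in X_s(v)$. By Proposition \ref{P2.4} (2), $dg^t\xi\in X_s(g^tv)$ for every $t\in\bb R$, so I may apply Proposition \ref{P2.11} at the point $g^tv$ to obtain $\magn{K\circ dg^t\xi}\leq k\magn{d\pi\circ dg^t\xi}$. Proposition \ref{P2.13} bounds the horizontal piece by $(1/A)\magn{d\pi\xi}$ for $t\geq 0$, and plugging both inequalities into the Sasaki decomposition yields
\[
	\magn{dg^t\xi}^2 \leq (1+k^2)\magn{d\pi\circ dg^t\xi}^2 \leq \f{1+k^2}{A^2}\magn{d\pi\xi}^2 \leq B^2\magn{\xi}^2,
\]
which is the desired bound. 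The $X_u(v)$ case is handled identically after replacing $t\geq 0$ with $t\leq 0$.

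There is no substantial obstacle; the corollary essentially repackages Propositions \ref{P2.11} and \ref{P2.13} through the orthogonal Sasaki splitting. The only mild subtlety is that Proposition \ref{P2.11} must be invoked at the shifted point $g^tv$ rather than at $v$, which is legitimate precisely because the stable and unstable bundles are $dg^t$-invariant by Proposition \ref{P2.4} (2).
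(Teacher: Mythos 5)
Your proof is correct and follows essentially the same route as the paper: the reverse direction drops to the horizontal component and applies Proposition \ref{P2.13}, and the forward direction chains Propositions \ref{P2.4} (2), \ref{P2.11}, and \ref{P2.13} through the Sasaki identity $\magn{dg^t\xi}^2 = \magn{d\pi\circ dg^t\xi}^2 + \magn{K\circ dg^t\xi}^2$ to get the bound $B\magn{\xi}$. Your remark about invoking Proposition \ref{P2.11} at the shifted point $g^tv$ via the invariance of the stable and unstable bundles is exactly the justification implicit in the paper's citation of Proposition \ref{P2.4} (2).
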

\begin{proof}
	If $\magn{dg^t \xi}$ is bounded above for all $t\geq 0$ (resp. $t\leq 0$), then $\magn{d\pi\circ dg^t\xi} \leq \magn{dg^t\xi}$ is bounded above, and $\xi\in X_s(v)$ (resp. $X_u(v)$) by the previous proposition. Conversely, if $\xi\in X_s(v)$ (resp. $X_u(v)$), then by Propositions \ref{P2.4} (2), \ref{P2.11}, and \ref{P2.13} we have for $t\geq 0$ (resp. $t\leq 0$),
\[
	\magn{dg^t\xi}^2 = \magn{d\pi\circ dg^t\xi}^2 + \magn{K\circ dg^t\xi}^2 \leq (1 + k^2) \magn{d\pi\circ dg^t\xi}^2 \leq ((1+k^2)/A^2)\magn{d\pi\xi}^2 \leq B^2\magn{\xi}^2 \; .
\]
\end{proof}

Let $J_0^*$ be the set of all perpendicular Jacobi fields on unit speed geodesics of $M$.
\begin{proposition}\label{P2.15}
	Let the universal cover $H$ of $M$ be compactly homogenous. For each $s\geq 0$, let $g(s) = \inf \set{\magn{J(s)}: J\in J_0^*, J(0) = 0, \text{ and } \magn{J'(0)} = 1}$. Then $g(s) > 0$ for each $s > 0$, and $g$ is semi-continuous on $(0, \infty)$.
\end{proposition}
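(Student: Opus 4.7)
The plan is to exploit compact homogeneity of $H$ via Proposition \ref{P1.11} to extract a limit Jacobi field from any sequence witnessing bad behavior of $g$ on $M$. Two preliminary observations set this up: first, since $p: H\to M$ is a surjective local isometry and $M$ has no conjugate points, Proposition \ref{R2.3.2} gives that $H$ has no conjugate points either; second, a perpendicular Jacobi field $J$ on a unit speed geodesic of $M$ lifts along any local inverse of $p$ to a perpendicular Jacobi field $\tilde J$ on a unit speed geodesic of $H$ with $\magn{\tilde J(s)} = \magn{J(s)}$ for all $s$, and conversely, Jacobi fields on $H$ project via $dp$ to Jacobi fields on $M$ with the same norms.

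For positivity, suppose $g(s_0) = 0$ for some $s_0 > 0$ and choose $J_n \in J_0^*$ with $J_n(0) = 0$, $\magn{J_n'(0)} = 1$, and $\magn{J_n(s_0)} \to 0$. Lift each $J_n$ to $\tilde J_n$ on a unit speed geodesic $\tilde \g_n$ of $H$. The sequences $\magn{\tilde\g_n'(0)} \equiv 1$, $\magn{\tilde J_n(0)} \equiv 0$, and $\magn{\tilde J_n'(0)} \equiv 1$ are uniformly bounded, so Proposition \ref{P1.11} yields isometries $\vp_n$ of $H$ such that, along a subsequence, $L_n = d\vp_n \tilde J_n$ converges to a Jacobi field $L$ on some geodesic $\s$ of $H$ in the sense of Definition \ref{D1.9}. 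Standard limit properties, together with the remark following Definition \ref{D1.9}, yield $L(0) = 0$, $\magn{L'(0)} = 1$, perpendicularity of $L$ to $\s$, and $\magn{L(s_0)} = \lim \magn{L_n(s_0)} = \lim \magn{\tilde J_n(s_0)} = 0$. Thus $\s(0)$ and $\s(s_0)$ are conjugate in $H$, the desired contradiction.

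Since $g$ is an infimum of the continuous functions $s\mapsto \magn{J(s)}$, it is automatically upper semi-continuous, so the nontrivial content of the semi-continuity claim is lower semi-continuity. Suppose toward contradiction that $s_n\to s_0 > 0$ with $\liminf g(s_n) = c < g(s_0)$, and choose $J_n \in J_0^*$ with $J_n(0) = 0$, $\magn{J_n'(0)} = 1$, and $\magn{J_n(s_n)} \leq g(s_n) + 1/n$. Applying the same compact-homogeneity extraction as above produces a limit Jacobi field $L$ on a geodesic $\s$ of $H$, perpendicular to $\s$, with $L(0) = 0$ and $\magn{L'(0)} = 1$. By the remark following Definition \ref{D1.9}, since $s_n \to s_0$ we also have $\magn{L(s_0)} = \lim \magn{L_n(s_n)} = \lim \magn{\tilde J_n(s_n)} = c$. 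Pushing $L$ down by $dp$ gives a perpendicular Jacobi field on a unit speed geodesic of $M$ with vanishing initial value, unit-norm initial derivative, and norm $c$ at $s_0$, contradicting $g(s_0) > c$.

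The main subtlety is checking that Proposition \ref{P1.11} applies in both extractions, which requires uniformly bounded initial data — forced here by the normalization $\magn{J_n'(0)} = 1$ and unit speed — and then verifying that the key features (perpendicularity, unit speed, vanishing at $0$, unit initial derivative) are preserved under the Sasaki-metric convergence of Definition \ref{D1.9}. Once these routine verifications are made, the argument reduces to the automatic continuity of Jacobi fields in initial conditions plus the no-conjugate-points property of $H$.
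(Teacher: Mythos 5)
Your proof is correct and follows essentially the same route as the paper: extract a limit of a minimizing sequence via compact homogeneity of $H$ (you invoke Proposition \ref{P1.11}, the paper the equivalent Proposition \ref{P1.12}) and contradict the absence of conjugate points in $H$. The only difference is that you additionally prove lower semi-continuity of $g$; the paper's ``semi-continuous'' claim is only upper semi-continuity (its proof is just the observation that an infimum of continuous functions is upper semi-continuous), so your extra argument is correct but not required.
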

\begin{proof}
	Since $g$ is the infimum of a set of continuous functions, we have for any $\a > 0$, the set \linebreak $\set{s > 0: g(s) < \a}$ is open in $(0, \infty)$. Thus $g$ is semi-continuous. If $g^*(s)$ is the function defined in the same way relative to Jacobi fields in $H$, then $g^*(s) = g(s)$ since $p: H\to M$ is a surjective local isometry, and the Jacobi fields along geodesics in $M$ are the projections of Jacobi fields along geodesics in $H$. Let $L_0^*$ be the set of all perpendicular Jacobi fields on unit speed geodesics in $H$. To prove $g(s) = g^*(s)$ is positive for $s > 0$, it suffices to show for fixed $s > 0$ that $g^*(s) = \magn{L(s)}$ for some $L\in L_0^*$ such that $L(0) = 0$ and $\magn{L'(0)} = 1$.
	
	Let $s > 0$ be fixed, and let $(L_n) \sb L_0^*$ be a sequence of Jacobi fields with $L_n(0) = 0$ and $\magn{L_n'(0)} = 1$ such that $L_n(s) \to g^*(s)$ as $n\to\infty$. This corresponds to a sequence $(\xi_n)\sb T(SH)$ with $d\pi \xi_n = 0$ and $\magn{\xi_n} = 1$ with $\magn{d\pi\circ dg^s \xi_n} \to g^*(s)$ as $n\to\infty$. By Proposition \ref{P1.12}, there exist a sequence $\vp_n$ of isometries of $H$ and a vector $\xi^* \in T(SH)$ such that $\xi_n^* = dT_{\vp_n} \xi_n \to \xi^*$ by passing to a subsequence, where $T_{\vp_n} = d\vp_n : TH\to TH$. Since $T_{\vp_n}$ is an isometry, we have by the relations shown in the proof of Lemma \ref{P1.5} that $d\pi\xi_n^* = 0$, $\magn{\xi_n^*} = 1$, and
\[
	d\pi\circ dg^s \xi_n^* = d\pi\circ dg^s \circ dT_{\vp_n} (\xi_n) = T_{\vp_n} \circ d\pi \circ dg^s (\xi_n) \; .
\]
	Therefore,
\[
	\magn{d\pi\circ dg^s \xi_n^*} = \magn{d\pi\circ dg^s \xi_n} \to g^*(s) \; .
\]
	If $L$ is the Jacobi field corresponding to $\xi^*$, then $g^*(s) = \magn{L(s)}$.
\end{proof}

\newpage
\section{Anosov equivalences}

We are finally able to state and prove the main result of this paper. First we need the definition of an Anosov flow.

\begin{definition}
	Let $\vp^t$ be a complete $C^\infty$ flow on a (complete) $C^\infty$ Riemannian manifold $N$ of dimension $n\geq 3$. Then the flow is said to be \emph{Anosov} or of \emph{Anosov type} if the following conditions are satisfied:
\begin{enumerate}
	\item The vector field $Y = \left.\pd {\vp^t}{t}\right\rvert_{t=0}$ defined by the flow never vanishes on $N$.
	\item For each $n\in N$ the tangent space $T_nN$ splits in to a direct sum
	\[
		T_nN = X_s^*(n) \oplus X_u^*(n)\oplus Z(n)
	\]
		($\dim X_s^* = k > 0, \dim X_u^* = l > 0, \dim Z = 1$), where $Z(n)$ is generated by $Y(n)$, i.e., $Z(n) = \set{cY(n) : c\in\bb R}$, and there exist positive number $a$, $b$, and $c$ such that
		\begin{enumerate}
			\item[(i)] for any $\xi \in X_s^*(n)$,
			\[
				\magn{d\vp^t \xi}\leq a\magn\xi e^{-ct} \text{ for } t\geq 0\text{ and } \magn{d\vp^t \xi}\geq b\magn\xi e^{-ct} \text{ for } t\leq 0 \; , \text{ and}
			\]
			\item[(ii)] for any $\eta \in X_u^*(n)$,
			\[
				\magn{d\vp^t \eta}\leq a\magn\eta e^{ct} \text{ for } t\leq 0\text{ and } \magn{d\vp^t \eta}\geq b\magn\eta e^{ct} \text{ for } t\geq 0 \; .
			\]
		\end{enumerate}
\end{enumerate}
\end{definition}

We now state the main theorem. For the rest of this section we assume that $M$ is a complete, $C^\infty$ Riemannian manifold of dimension two without conjugate points such that the universal cover $H$ is compactly homogenous. Let $g^t$ denote the geodesic flow on $TM$, $SM$, $TH$, and $SH$. Since $M$ has the same values for the Gaussian  curvature as $H$, we have that $\k$ is bounded, so there exists a number $k > 0$ such that $\k > -k^2$ in $M$.

\begin{theorem}\label{T3.2} Let $M$ be a surface without conjugate points such that universal cover $H$ of $M$ is compactly homogenous. Then the following properties are equivalent:
	\begin{enumerate}
		\item The geodesic flow on $SM$ is of Anosov type.
		\item For every $v\in SM$, $X_s(v) \cap X_u(v) = \set{0}$.
		\item For every $v\in SM$, $T_v(SM) = X_s(v) \oplus X_u(v) \oplus Z(v)$, where $Z(v)$ is the one-dimensional subspace generated by $Y(v)$.
		\item There exists no nonzero perpendicular Jacobi field $J$ on a unit speed geodesic $\g$ of $M$ such that $\magn{J(t)}$ is bounded for all $t\in\bb R$.
	\end{enumerate}
	
	If, further, $M$ has no focal points, the above conditions are also equivalent to the following:
	\begin{enumerate}
		\setcounter{enumi}{4}
		\item There exists no nonzero perpendicular parallel Jacobi field $J$ on a unit speed geodesic $\g$ of $M$.
		\item Every geodesic of $M$ passes through a point of negative Gaussian curvature.
	\end{enumerate}
\end{theorem}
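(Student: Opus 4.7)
The plan is to cycle through (1) $\Rightarrow$ (3) $\Leftrightarrow$ (2) $\Rightarrow$ (4) $\Rightarrow$ (1), exploiting dimension counts and Proposition \ref{P2.12}, and then to handle (4) $\Leftrightarrow$ (5) $\Leftrightarrow$ (6) separately under the no-focal-points hypothesis. For (1) $\Rightarrow$ (3), I would first check that $X_s^*(v) \subset \{Y(v)\}^\perp$: any component along $Y(v)$ would be preserved as $Y(g^tv)$ of unit norm, obstructing the decay $\|dg^t\xi\| \to 0$ required by the Anosov definition. The decay then bounds $\|d\pi \circ dg^t\xi\|$, so Proposition \ref{P2.12} places $\xi$ in $X_s(v)$. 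Since $\dim T_v(SM) = 3$ forces $\dim X_s^* = \dim X_u^* = 1$, the inclusion $X_s^*(v) \subset X_s(v)$ is an equality (and likewise for $X_u$), yielding (3). The equivalence (3) $\Leftrightarrow$ (2) is a dimension count inside the 2-dimensional subspace $\{Y(v)\}^\perp$, and (2) $\Rightarrow$ (4) follows because a nonzero perpendicular bounded Jacobi field would, via Proposition \ref{P2.12} applied both forward and backward, give a nonzero vector in $X_s(v) \cap X_u(v)$.

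The main content is (4) $\Rightarrow$ (1), where the compactly homogeneous hypothesis is essential. First I would show (4) $\Rightarrow$ (2): if $\xi \in X_s(v) \cap X_u(v)$ is nonzero, Remark \ref{HaoJacobi} yields $J_\xi(s) = c d(s) E_1(s) = c' \bar d(s) E_1(s)$, forcing $d \equiv \bar d$; translating $J_\xi$ along $\gamma_v$, applying isometries of the universal cover $H$ to bring base points into a compact set, and invoking Proposition \ref{P1.11} would extract a nonzero limit Jacobi field bounded on all of $\mathbb{R}$, contradicting (4). With the decomposition (3) in hand, I would next establish a uniform contraction: there exist $T > 0$ and $\mu \in (0,1)$ such that $\|dg^T\xi\| \leq \mu\|\xi\|$ for every $v$ and every unit $\xi \in X_s(v)$. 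Failure produces sequences $v_n, \xi_n$ and $t_n \to \infty$ with $\|dg^{t_n}\xi_n\|$ bounded below; passing to a limit via Proposition \ref{P1.12} yields $\xi^* \in X_s(v^*)$ whose Jacobi field is bounded forward (because $\xi^* \in X_s$) and bounded backward (by the non-contraction witnessed in the sequence), contradicting (4) again. Iterating the uniform contraction and interpolating over $[0,T]$ produces the Anosov bound $\|dg^t\xi\| \leq a\|\xi\|e^{-ct}$ on $X_s$; the expansion bound for $t \leq 0$ on $X_s$ and the symmetric bounds on $X_u$ follow by the same argument together with Proposition \ref{P2.4}(1).

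For the equivalences under no focal points, (4) $\Rightarrow$ (5) is immediate since a nonzero parallel perpendicular Jacobi field has constant positive norm. For (5) $\Rightarrow$ (6), I would argue contrapositively: if some geodesic $\gamma$ satisfies $\kappa \geq 0$ everywhere, then the function $d$ along $\gamma$ satisfies $d'' = -\kappa d \leq 0$ and $d > 0$ on all of $\mathbb{R}$, but a positive concave function on $\mathbb{R}$ must be constant, forcing $\kappa \equiv 0$ on $\gamma$ and producing the parallel field $E_1$ that contradicts (5). For (6) $\Rightarrow$ (4), let $J$ be a nonzero bounded perpendicular Jacobi field on $\gamma$; Corollary \ref{HaoNoFocal} and its reverse analogue make $\|J(s)\|$ eventually monotone in each direction, hence convergent to some limit $L \geq 0$. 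When $L > 0$ in one time direction, the shifted fields $J_n(s) = J(s+n)$ converge via Proposition \ref{P1.11} to a Jacobi field of constant positive norm on a limit geodesic $\gamma^*$; being of constant norm in the one-dimensional perpendicular direction, this limit is parallel, forcing $\kappa \equiv 0$ on $\gamma^*$ and contradicting (6). The case $L = 0$ is ruled out by the strict monotonicity near a zero from Corollary \ref{HaoNoFocal} combined with $J \not\equiv 0$.

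The hard part will be the pair of limiting arguments in (4) $\Rightarrow$ (1): both extracting a bounded-on-$\mathbb{R}$ Jacobi field from a hypothetical element of $X_s(v) \cap X_u(v)$ and extracting one from a failure of uniform contraction require careful choreography of the shifts, isometry translates, and passage to limits via Definition \ref{D1.9}. The principal subtlety is ensuring the limit Jacobi field is nonzero while simultaneously inheriting the upper and lower norm bounds needed to invoke (4); Proposition \ref{P2.15} and the uniform curvature bound $\kappa > -k^2$ will be the workhorses controlling these limits.
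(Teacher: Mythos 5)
Your overall architecture matches the paper's, but there is a genuine gap at the heart of (4) $\Rightarrow$ (1), and it appears twice. The definition of $X_s(v)$ only says $\xi_t\to\xi$ as $t\to\infty$; it does \emph{not} by itself give any bound on $\magn{d\pi\circ dg^t\xi}$ for $t\geq 0$. Both of your limiting arguments silently assume such a bound: in your (4) $\Rightarrow$ (2) sketch, the identity $d\equiv\bar d$ gives no boundedness of $d$ at all, so the translates of $J_\xi$ carry no uniform bound to feed into Proposition \ref{P1.11}, and the extracted limit has no reason to be bounded on $\bb R$; in your uniform-contraction argument, the claim that the limit's Jacobi field is ``bounded forward because $\xi^*\in X_s$'' is exactly the unproved implication. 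The paper closes this gap with Proposition \ref{P3.7}: assuming (4), a compactness argument (with a four-case analysis of the limits of rescaled Jacobi fields, using Propositions \ref{P1.11}, \ref{P2.7}, \ref{P2.9}, and \ref{P2.15}) produces a \emph{uniform} constant $A>0$ with $\magn{J(t)}\geq A\magn{J(s)}$ for $t\geq s\geq 1$ whenever $J(0)=0$; only then do Proposition \ref{P2.13} and Corollary \ref{C2.14} convert membership in $X_s(v)$ into the bound $\magn{dg^t\xi}\leq B\magn\xi$ for $t\geq 0$. You need to isolate and prove this uniform lower bound before either of your compactness arguments can run; Proposition \ref{P2.15} and the curvature bound alone do not supply it. A smaller but real misstep: in (6) $\Rightarrow$ (4) you invoke Corollary \ref{HaoNoFocal} for a Jacobi field that need not vanish anywhere, whereas that corollary applies only to fields with $J(0)=0$. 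The paper instead puts the bounded field $J$ into $J_s(\g)\cap J_u(\g)$ via Proposition \ref{P2.12} and uses Proposition \ref{P2.13} (with $A=1$, $s_0=0$ from the no-focal-points hypothesis) to make $\magn{J}$ simultaneously nonincreasing and nondecreasing, hence constant and therefore parallel in dimension two --- no second limiting argument is needed.

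Where your route is sound it is close to the paper's: (1) $\Rightarrow$ (3) via the splitting and Proposition \ref{P2.12} is a legitimate variant of the paper's (1) $\Rightarrow$ (4), the dimension counts are right, and the iteration-plus-interpolation step is exactly Lemmas \ref{L3.11} and \ref{L3.12}. Your concavity argument for (5) $\Rightarrow$ (6) ($d''=-\k d\leq 0$ with $d>0$ on $\bb R$ forces $d$ constant) is a pleasant elementary substitute for the paper's Riccati-equation bound (Lemma \ref{Green21}) and is correct as stated.
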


We shall prove this theorem in multiple steps, which will require intermediary results.

\begin{proposition}\label{P3.7} Assume that $M$ admits no nonzero perpendicular Jacobi field $J$ on a unit speed geodesic $\g$ such that $\magn{J(t)}$ is bounded above for all $t\in\bb R$. Then there exists a constant $A > 0$ such that if $J$ is a nonzero perpendicular Jacobi field on a unit speed geodesic $\g$ such that $J(0) = 0$, then $\magn{J(t)} \geq A\magn{J(s)}$ for any numbers $t\geq s\geq 1$.
\end{proposition}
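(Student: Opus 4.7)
The plan is to proceed by contradiction: suppose no such $A > 0$ exists. Then for each integer $n > 0$ pick a perpendicular Jacobi field $J_n$ on a unit speed geodesic $\g_n$ with $J_n(0) = 0$ and numbers $t_n \geq s_n \geq 1$ satisfying $\magn{J_n(t_n)} < (1/n)\magn{J_n(s_n)}$. Normalize by $\magn{J_n'(0)} = 1$, so that relative to an adapted frame along $\g_n$ we have $\magn{J_n(s)} = a_n(s)$, and $a_n(t_n)/a_n(s_n) < 1/n$. Applying Lemma \ref{L2.8} to $u_n(s) = a_n'(s)/a_n(s)$ gives $\abs{u_n(s)} \leq k\coth(k)$ for $s \geq 1$, so integrating yields $\ln n < \abs{\int_{s_n}^{t_n} u_n(r)\,dr} \leq (t_n - s_n)\,k\coth(k)$, forcing $t_n - s_n \to \infty$, and in particular $t_n \to \infty$.

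Now lift each $J_n$ to a perpendicular Jacobi field on a lift $\g_n^*$ in $H$, and split cases on $M_n = \max_{s\in[1,t_n]} a_n(s)$. If a subsequence of $M_n$ is bounded by a constant $C$, then $\magn{J_n(s)} \leq C$ for $s\in[1,t_n]$ while $\magn{J_n(0)} = 0$ and $\magn{J_n'(0)} = 1$. By compact homogeneity of $H$, Proposition \ref{P1.11} provides isometries $\vp_n$ of $H$ so that, after a subsequence, $d\vp_n J_n^*$ converges to a nonzero perpendicular Jacobi field $L$ on a geodesic $\s$ in $H$ with $L(0) = 0$, $\magn{L'(0)} = 1$, and $\magn{L(s)} \leq C$ for all $s \geq 1$. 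But Proposition \ref{P2.9} forces $\magn{L(s)} \to \infty$ as $s\to\infty$, a contradiction.

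Otherwise $M_n \to \infty$. Let $r_n \in [1,t_n]$ be a point where $a_n$ attains $M_n$. Integrating the Riccati bound $u_n \leq k\coth(ks)$ from $0$ gives the Sturm estimate $a_n(s) \leq \sinh(ks)/k$, so $M_n \to \infty$ forces $r_n \to \infty$. The shifted, rescaled field $\hat J_n(u) = J_n(u + r_n)/M_n$ is then a perpendicular Jacobi field on the shifted geodesic satisfying $\magn{\hat J_n(0)} = 1$, $\magn{\hat J_n'(0)} = \abs{u_n(r_n)} \leq k\coth(k)$, $\magn{\hat J_n(u)} \leq 1$ for $u\in[1 - r_n, t_n - r_n]$, and $\magn{\hat J_n(t_n - r_n)} = a_n(t_n)/M_n \leq a_n(t_n)/a_n(s_n) < 1/n$. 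Applying Proposition \ref{P1.11} to lifts of $\hat J_n$ in $H$ produces, after a subsequence and isometries, a perpendicular Jacobi field $L$ on a geodesic $\s$ in $H$ with $\magn{L(0)} = 1$ and $\magn{L'(0)} \leq k\coth(k)$.

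Finally, split on the behavior of $t_n - r_n$. If $t_n - r_n \to \infty$, then since $1 - r_n \to -\infty$ as well, every fixed $u\in\bb R$ eventually lies in $[1 - r_n, t_n - r_n]$, so $\magn{L(u)} \leq 1$ on all of $\bb R$; projecting $L$ via $dp$ gives a nonzero bounded perpendicular Jacobi field on a geodesic of $M$, directly contradicting the hypothesis. If instead $t_n - r_n$ is bounded, pass to a subsequence with $t_n - r_n \to P \geq 0$; then $\magn{L(u)} \leq 1$ on $(-\infty, P]$ and $L(P) = 0$, and $P > 0$ is forced by $\magn{L(0)} = 1$. Applying Proposition \ref{P2.9} to the reversed perpendicular Jacobi field $\bar L(v) = L(P - v)$ on the reversed geodesic, which satisfies $\bar L(0) = 0$ and $\bar L'(0) \neq 0$ by nontriviality of $L$, shows $\magn{L(P - v)} = \magn{\bar L(v)} \to \infty$ as $v \to \infty$, i.e.\ $\magn{L(u)} \to \infty$ as $u \to -\infty$, contradicting $\magn{L(u)} \leq 1$ on $(-\infty, P]$. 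The main obstacle is the case $M_n \to \infty$: one must choose the normalization at the max point $r_n$ so that the extracted limit $L$ is nontrivial, and rule out the borderline sub-case via the reversed-geodesic application of Proposition \ref{P2.9}.
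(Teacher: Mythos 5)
Your proof is correct, and it reaches the same contradiction by the same core mechanism as the paper (extract a limiting Jacobi field via Proposition \ref{P1.11}, then contradict it using Proposition \ref{P2.9} and the no-bounded-field hypothesis), but you organize the cases differently and avoid one of the paper's tools. The paper maximizes $\magn{J_n}$ over $[0,t_n]$, so it needs a separate argument (via Proposition \ref{P2.15} and the function $g(1) > 0$) to push the max point $u_n$ away from $0$ before it can bound $\magn{L_n'(0)} \leq k\coth(k\d)$; then it runs a four-way case split on the limiting behavior of $u_n$ and $t_n - u_n$, using the no-conjugate-points hypothesis to kill the case where both stay finite. You instead first observe $t_n - s_n \to \infty$ from the uniform Riccati bound $\abs{u_n} \leq k\coth(k)$ on $[1,\infty)$, maximize over $[1,t_n]$ so $r_n \geq 1$ is automatic, and then dichotomize on whether $M_n$ stays bounded. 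In the bounded case you skip renormalization entirely and hit the limit with Proposition \ref{P2.9} directly; in the unbounded case the Sturm comparison $a_n(s) \leq \sinh(ks)/k$ forces $r_n \to \infty$, which collapses the case analysis to just the two sub-cases on $t_n - r_n$. The net effect is that you replace Proposition \ref{P2.15} with the elementary Sturm/Riccati bound and a cleaner dichotomy, at the cost of a few extra preliminary estimates. Both are valid; yours is somewhat more self-contained, the paper's a bit shorter given it already has Proposition \ref{P2.15} on hand.
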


\begin{proof} 
	It suffices to prove this result for the universal cover $H$ of $M$. Indeed, the covering map is a surjective local isometry, so Jacobi fields along a geodesic in $M$ are projections of Jacobi fields along geodesics in $H$.
	
	Assume toward contradiction that the proposition is false. Then there exist nonzero perpendicular Jacobi fields $J_n$ on unit speed geodesics $\g_n$ in $H$, along with sequences $1 \leq s_n \leq t_n$ such that for each integer $n > 0$, we have $J_n(0) = 0$ and $\magn{J_n(t_n)} \leq (1/n) \magn{J_n(s_n)}$. By linearity, we may assume $\magn{J_n'(0)} = 1$.
	
	By the extreme value theorem, we can choose a sequence $(u_n) \sb \bb R$ such that $0 \leq u_n \leq t_n$ and $\magn{J_n(s)} \leq \magn{J_n(u_n)}$ for every $0\leq s\leq t_n$. We claim that the $u_n$ are lower bounded by some $\d > 0$. For if this were not the case, then $u_n \to 0$ by passing to a subsequence. By Proposition \ref{P1.11}, there exists a sequence $\vp_n$ of isometries of $H$ and a Jacobi field $L$ on a unit speed geodesic $\g$ in $H$ such that $L_n = d\vp_n J_n \to L$ by passing to a subsequence. By continuity,
\[
	\lim_{n\to\infty} \magn{J_n(u_n)} = \lim_{n\to\infty} \magn{L_n(u_n)} = \magn{L(0)} = 0 \; .
\]
	On the other hand, since $t_n \geq 1$ for all $n$, we have by Proposition \ref{P2.15} that
\[
	\magn{J_n(u_n)} \geq \magn{J_n(1)} \geq g(1) > 0 \; ,
\]
 	a contradiction.
 	
 	Next, for each $n$, let $\s_n(t) = \g_n(u_n + t)$ and $L_n(t) = J_n(u_n + t)/\magn{J_n(u_n)}$. Then $L_n$ is a Jacobi field along $\s_n$ with $\magn{L_n(0)} = 1$, $\magn{L_n(s)} \leq 1$ for $-u_n\leq s\leq t_n-u_n$, $L_n(-u_n) = 0$, and $\magn{L_n(t_n-u_n)} \leq 1/n$. Since $H$ is compactly homogenous, $\k > -k^2$ for some $k > 0$, and by Proposition \ref{P2.7}, $\magn{L_n'(0)} = \magn{J_n'(u_n)}/\magn{J_n(u_n)} \leq k\coth (ku_n)\leq k\coth (k\d)$. By Proposition \ref{P1.11}, there exists a sequence $(\vp_n)$ of isometries of $H$ and a Jacobi field $L^*$ on a unit speed geodesic $\g$ such that $L_n^* = d\vp_n L_n \to L^*$ by passing to a subsequence. Since $\magn{L^*(0)} = 1$ by continuity, we have $L^*\not\equiv 0$.
 	
 	By passing to a subsequence, we now have four cases for $\lim_{n\to\infty} t_n-u_n$ and $\lim_{n\to\infty} u_n$, namely each limit can either be finite and nonnegative or be positive infinity. We shall derive contradictions in each of these cases.
 	
 	First, suppose $t_n-u_n \to l$ and $u_n \to u$ for $l\geq 0$ and $u\geq \d> 0$. Continuity yields $L^*(-u) = 0$ and $L^*(l) = 0$. But $t\geq 0$ and $-u \leq -\d$, and since $H$ has no conjugate points, we have $L^*\equiv 0$, a contradiction.
 	
 	Next, suppose $u_n\to\infty$ and $t_n-u_n \to l$ for $l\geq 0$. By continuity, $L^*(l) = 0$ and $\magn{L^*(u)} \leq 1$ for all $u\leq l$. By Proposition \ref{P2.9} applied to $L^*(l-t)$, however, $\magn{L^*(t)}\to\infty$ as $t\to -\infty$, a contradiction. The case when $t_n-u_n\to\infty$ and $u_n\to u$ for $u\geq \d> 0$ is proved similarly.
 	
 	Finally, when both sequences approach infinity, then $\magn{L^*(t)}\leq 1$ for all $t\in\bb R$ by continuity, but this contradicts the assumption of the proposition.
\end{proof}
 	
\begin{proof}[Proof of (2) $\iff$ (3) $\iff$ (4)]
 	For any $v\in SM$, we have $T_v(SM) = Z(v)^{\perp} \oplus Z(v)$, where $Z(v)$ is the 1-dimensional subspace generated by $Y(v)$. Since $SM$ is 3-dimensional, $Z(v)^{\perp}$ is 2-dimensional. Since $X_s(v)$ and $X_u(v)$ are 1-dimensional subspaces of $Z(v)^{\perp}$, we immediately have (2) $\iff$ (3).
 	
 	Next, suppose that $M$ admits a nonzero perpendicular Jacobi field $J$ on a unit speed geodesic $\g$ such that $\magn{J(t)}$ is bounded everywhere. If $v = \g'(0)$ and $\xi\in T_v(SM)$ corresponds to $J$, then Proposition \ref{P2.12} shows that $\xi\in X_s(v) \cap X_u(v)$, and thus (2) $\Longrightarrow$ (4). The reverse direction follows from Propositions \ref{P2.13} and \ref{P3.7}.
\end{proof}

\begin{proposition}\label{P3.9}
	Assume that $M$ admits no nonzero perpendicular Jacobi field $J$ on a unit speed geodesic $\g$ such that $\magn{J(t)}$ is bounded above for all $t\in \bb R$. Then for any number $\e > 0$, there exists a number $T>0$ such that 
	\begin{enumerate}
		\item for any $v\in SM$ and any $\xi\in X_s(v)$, $\magn{dg^t\xi} \leq \e\magn{\xi}$ for $t\geq T$, and
		\item for any $v\in SM$ and any $\eta\in X_u(v)$, $\magn{dg^t\eta} \leq \e\magn{\eta}$ for $t\leq - T$.
	\end{enumerate}
\end{proposition}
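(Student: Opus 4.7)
The plan is to prove (1) by contradiction; (2) then follows by applying (1) to the time-reversed flow via Proposition \ref{P2.4}(1). Suppose (1) fails, so there exist $\e > 0$, sequences $t_n \to \infty$, $v_n \in SM$, and $\xi_n \in X_s(v_n)$ with $\magn{\xi_n} = 1$ and $\magn{dg^{t_n}\xi_n} > \e$. I would lift to $H$ via Proposition \ref{R2.3.2}, getting $\tilde\xi_n \in X_s(\tilde v_n) \sb T(SH)$ with identical norms by Proposition \ref{P1.5}. Using compact homogeneity of $H$, Proposition \ref{P1.12} gives isometries $\vp_n$ such that, after passing to a subsequence, $\xi_n^* = dT_{\vp_n}\tilde\xi_n \to \xi^*$ in $T(SH)$ and the base points $v_n^* \to v^* \in SH$.

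The key closure step is that this limit stays stable: $\xi^* \in X_s(v^*)$. By Proposition \ref{P3.7} the hypothesis of Proposition \ref{P2.13} is in force, so every $\xi_n^* \in X_s(v_n^*)$ satisfies $\magn{d\pi \circ dg^t \xi_n^*} \leq (1/A)\magn{d\pi\xi_n^*}$ for all $t \geq 0$. Continuity passes this bound to $\xi^*$, and $\brak{\xi^*, Y(v^*)} = 0$; Proposition \ref{P2.12} then gives $\xi^* \in X_s(v^*)$, while Corollary \ref{C2.14} furnishes a constant $B > 0$ with $\magn{dg^t \zeta} \leq B\magn\zeta$ for any stable $\zeta$ and $t \geq 0$. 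Now push $\xi_n^*$ to time $t_n$: set $\eta_n = dg^{t_n}\xi_n^*$ with foot $w_n = g^{t_n} v_n^*$, so $\eta_n \in X_s(w_n)$ by Proposition \ref{P2.4}(2) and $\magn{\eta_n} \in (\e, B]$. Applying Proposition \ref{P1.12} a second time with isometries $\psi_n$ translating $w_n$ into a compact set, and passing to a further subsequence, $\tilde\eta_n = dT_{\psi_n}\eta_n \to \eta^*$ with base points converging to some $w^* \in SH$ and $\magn{\eta^*} \geq \e > 0$. The same closure argument gives $\eta^* \in X_s(w^*)$, so $\magn{dg^t \eta^*} \leq B\magn{\eta^*}$ for $t \geq 0$.

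For the matching past bound, fix $s > 0$ and use the isometry invariance under $dT_{\psi_n}$ together with $dg^{-s}\eta_n = dg^{t_n-s}\xi_n^*$ to compute
\[
	\magn{dg^{-s}\tilde\eta_n} = \magn{dg^{t_n-s}\xi_n^*} \leq B\magn{\xi_n^*} = B
\]
as soon as $t_n \geq s$; letting $n\to\infty$ yields $\magn{dg^{-s}\eta^*} \leq B$ for all $s \geq 0$. Thus $\magn{J_{\eta^*}(t)} \leq \magn{dg^t\eta^*}$ is bounded on all of $\bb R$. Since $\brak{\eta^*, Y(w^*)} = 0$, Proposition \ref{P1.7}(4) makes $J_{\eta^*}$ a perpendicular Jacobi field on the unit speed geodesic $\g_{w^*}$, and $\magn{\eta^*} \geq \e > 0$ makes it nonzero, contradicting the standing hypothesis. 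The main obstacle is this two-stage limit argument: one application of compact homogeneity only produces a stable limit vector $\xi^*$, but the backward boundedness that yields a genuine two-sided bounded perpendicular Jacobi field emerges only after a second translation that brings the time-$t_n$ image back into a compact region, where the forward stability of the $\xi_n^*$ is converted into the past bound on $\eta^*$.
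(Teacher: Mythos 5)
Your argument is correct and hits the same essential mechanism as the paper: use Corollary \ref{C2.14} (enabled by Proposition \ref{P3.7}) to get the uniform bound $\magn{dg^s\xi_n}\leq B$ for $s\geq 0$, then translate the time-$t_n$ images $dg^{t_n}\xi_n$ into a compact region by isometries of $H$, extract a subsequential limit of norm at least $\e$, and observe that because $t_n\to\infty$ the one-sided forward bound on the $\xi_n$ becomes a two-sided bound on the limit, producing a nonzero bounded perpendicular Jacobi field and a contradiction.

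Two parts of your write-up are extra machinery relative to the paper's proof, though neither is wrong. First, your initial application of Proposition \ref{P1.12} to the $\xi_n$ themselves (producing $\xi_n^*\to\xi^*$) is unused: $\xi^*$ plays no role, and since $dT_{\vp_n}$ is an isometry commuting with $g^t$, all the norms you compute with $\xi_n^*$ are identical to those with $\xi_n$. The paper applies Proposition \ref{P1.12} exactly once, to $\psi_n = dg^{t_n}\xi_n$, which is precisely your $\eta_n$ modulo the redundant $\vp_n$. Second, your ``closure'' step establishing $\xi^*\in X_s(v^*)$ and $\eta^*\in X_s(w^*)$ is not needed: the two-sided bound $\magn{dg^t\eta^*}\leq B$ for all $t\in\bb R$ already follows by passing to the limit the bound $\magn{dg^t\tilde\eta_n}\leq B$ valid for $t\geq -t_n$, and the only properties of $\eta^*$ you actually need for the contradiction are that it is nonzero, orthogonal to $Y$, and has bounded orbit — not that it is stable. (Your closure argument is sound, via Propositions \ref{P2.13} and \ref{P2.12}, and worth knowing, but it's a detour here.) The paper records $\psi_n^*\in A_s$ but never uses stability of the limit either; it passes the uniform bound directly.
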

\begin{proof}
	It suffices by Propositions \ref{P1.5} and \ref{R2.3.2} to prove these assertions for the universal cover $H$ of $M$. We start with the first assertion, and assume toward contradiction that there exists an $\e > 0$ for which there are sequences $(t_n) \sb\bb R$, $(v_n)\sb SH$, and $(\xi_n)\sb T(SH)$ such that $\xi_n\in X_s(v_n)$, $t_n\to\infty$, and $\magn{dg^{t_n}\xi_n} > \e \magn{\xi_n}$ for every integer $n$. By linearity, we may assume $\magn{\xi_n} = 1$. If $\psi_n = dg^{t_n} \xi_n$, then $\psi_n\in A_s = \bigcup_{v\in SH} X_s(v)$ and $\magn{\psi_n} > \e$ for every $n$. Corollary \ref{C2.14} and Proposition \ref{P3.7} then imply that there exists a number $B>0$ such that for every $n$, $\magn{dg^t\psi_n} \leq B$ for $-t_n\leq t < \infty$. By Proposition \ref{P1.12}, there exist a sequence $\vp_n$ of isometries of $H$ and a vector $\psi^* \in T(SH)$ such that $\psi_n^* = dT_{\vp_n}\psi_n\to\psi^*$ by passing to a subsequence, where $T_{\vp_n} = d\vp_n$. By Proposition \ref{R2.3.2}, we have $\psi_n^* \in A_s$ for every $n$. Since $g^t\circ d\vp_n = d\vp_n\circ g^t$ for all $t\in\bb R$, we have
\begin{align*}
	\magn{dg^t\psi_n^*} = \magn{dg^t\circ dT_{\vp_n} \psi_n} = \magn{dT_{\vp_n}\circ dg^t\psi_n} = \magn{dg^t \psi_n} \leq B\;
\end{align*}
	for $-t_n\leq t< \infty$, where the last equality follows because $\vp_n$ is an isometry. The vector $\psi^*\neq 0$ since $\magn{\psi_n^*} = \magn{\psi_n} > \e$ for every $n$, and by continuity $\magn{dg^t \psi^*} \leq B$ for all $t\in\bb R$. If $\psi^* \in T_{v^*}(SH)$ and $\psi_n^*\in T_{v_n^*} (SH)$, then $\brak{\psi^*,Y(v^*)} = 0$ since $\brak{\psi_n^*, Y(v_n^*)} = 0$ for every $n$. Thus $J_{\psi^*}$ is a nonzero perpendicular Jacobi field on $\g_{v^*}$ in $H$ whose magnitude is uniformly bounded by $B$ for all $t\in\bb R$. Then $J(t) = dp J_{\psi^*}(t)$ is a nonzero perpendicular Jacobi field on the geodesic $p\circ \g_{v^*}$ in $M$ such that $\magn{J(t)} = \magn{J_{\psi^*(t)}} \leq B$ for all $t\in\bb R$, contradicting the assumption in the proposition.
	
	For the second assertion, we simply note that $\eta\in X_u(v)$ if and only if $\xi=dS(\eta)\in X_s(-v)$, where $S: SM\to SM$ maps $v \maps -v$. Since $S$ is an isometry of $SM$ satisfying the relation $S\circ g^t = g^{-t}\circ S$ for all $t\in\bb R$, we have that $\magn{dg^{-t}\eta} = \magn{dS\circ dg^t \xi} = \magn{dg^t\xi} \leq \e\magn{\xi} = \e\magn{\eta}$ for all $t \geq T$.
\end{proof}

\begin{lemma}\label{L3.11}
	Let $M$ be a complete manifold without conjugate points, which satisfies the hypothesis of Proposition \ref{P2.13}. For each $s\geq 0$, let $\vp(s) = \sup\set{\magn{dg^s\xi}:\xi\in A_s, \magn{\xi} = 1}$. Then
	\begin{enumerate}
		\item there exists a constant $B > 0$ such that $0\leq \vp(s)\leq B$ for all $s\geq 0$, and
		\item $\vp(s+t)\leq \vp(s)\cdot\vp(t)$ for all $s\geq 0$ and $t\geq 0$. 
	\end{enumerate}
	Furthermore, if the universal cover $H$ is compactly homogenous and $M$ admits no nonzero perpendicular Jacobi field $J$ on a unit speed geodesic $\g$ such that $\magn{J(t)}$ is bounded above for all $t\in \bb R$, then
	\begin{enumerate}
		\item[(3)] $\vp(s) \to 0$ as $s\to \infty$.
	\end{enumerate}
\end{lemma}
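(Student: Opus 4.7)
The plan is to deduce all three parts as direct consequences of results already proved in Sections 2 and 3, essentially by translating them into the language of the function $\vp$.

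For (1), the hypothesis of Proposition \ref{P2.13} is assumed, so Corollary \ref{C2.14} applies and produces a constant $B = [(1+k^2)/A^2]^{1/2}$ such that $\magn{dg^t\xi}\leq B\magn\xi$ for every $v\in SM$, every $\xi\in X_s(v)$, and every $t\geq 0$. Since $A_s = \bigcup_{v\in SM} X_s(v)$, taking the supremum over unit $\xi\in A_s$ gives $0\leq\vp(s)\leq B$ for all $s\geq 0$. For (2), I would combine the cocycle identity $dg^{s+t} = dg^s\circ dg^t$ with the $g^t$-invariance of the stable distribution from Proposition \ref{P2.4} (2), which says $dg^tX_s(v) = X_s(g^tv)$ and hence $dg^tA_s\sb A_s$. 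Given a unit $\xi\in A_s$, set $\eta = dg^t\xi\in A_s$; then $\magn\eta\leq\vp(t)$, and (handling the trivial case $\eta=0$ separately) applying the definition of $\vp(s)$ to $\eta/\magn\eta$ gives $\magn{dg^{s+t}\xi} = \magn{dg^s\eta}\leq \vp(s)\magn\eta\leq \vp(s)\vp(t)$. Supremizing over $\xi$ yields $\vp(s+t)\leq \vp(s)\vp(t)$.

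For (3), the additional hypotheses are exactly those of Proposition \ref{P3.9}: $H$ is compactly homogenous and $M$ admits no nonzero perpendicular Jacobi field of uniformly bounded norm on $\bb R$. Given any $\e>0$, Proposition \ref{P3.9} (1) delivers a $T>0$ such that $\magn{dg^t\xi}\leq\e\magn\xi$ for every $v\in SM$, every $\xi\in X_s(v)$, and every $t\geq T$. This is precisely the inequality $\vp(t)\leq\e$ for $t\geq T$, so $\vp(s)\to 0$ as $s\to\infty$.

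The lemma is essentially a repackaging of Corollary \ref{C2.14}, Proposition \ref{P2.4} (2), and Proposition \ref{P3.9} in terms of the single function $\vp$; there is no real obstacle. The only points requiring minor care are verifying that $A_s$ is preserved by $dg^t$ for all $t$ (which is the content of Proposition \ref{P2.4} (2)) and matching the hypotheses of the three parts of the lemma to those of the corresponding invoked results.
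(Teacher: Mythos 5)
Your proposal is correct and follows essentially the same route as the paper: part (1) via Corollary \ref{C2.14}, part (2) via the semigroup property of $dg^t$ combined with $dg^t$-invariance of $A_s$ (Proposition \ref{P2.4} (2)) and the homogeneity $\magn{dg^s\eta}\leq\vp(s)\magn\eta$, and part (3) as an immediate consequence of Proposition \ref{P3.9}. The only difference is that you spell out the normalization of $\eta=dg^t\xi$ explicitly, which the paper leaves implicit.
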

\begin{proof}
	First, if $B>0$ is the constant in Corollary \ref{C2.14}, then $0\leq\vp(s)\leq B$ for $s\geq 0$.
	
	Next, for any $\xi\in A_s$ and any $s\geq 0$, $\magn{dg^s\xi}\leq \vp(s)\magn{\xi}$ by definition. Let $\xi\in A_s$, $\magn{\xi} = 1$ be given, and let $s\geq 0$, $t\geq 0$ be arbitrary. Then
\[
	\magn{dg^{s+t} \xi} = \magn{dg^s \circ dg^t \xi} \leq \vp(s) \magn{dg^t\xi} \leq \vp(s)\cd\vp(t) \; ,
\]
	since $A_s$ is invariant under $dg^t$. Taking the above over all $\xi\in A_s$ yields the desired result.
	
	The final property follows immediately from Proposition \ref{P3.9}.
\end{proof}

\begin{lemma}\label{L3.12}
	Let $\vp: (0,\infty)\to(0,\infty)$ be a function satisfying all three properties of the previous lemma. Then there exist numbers $a>0$ and $c>0$ such that $\vp(s) \leq ae^{-cs}$ for $s\geq 0$.
\end{lemma}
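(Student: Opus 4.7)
The plan is to exploit submultiplicativity to turn the qualitative decay $\varphi(s)\to 0$ into geometric decay, which is the standard Fekete-style trick for subadditive/submultiplicative functions.

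First, I would use property (3) to choose a fixed $s_0>0$ with $\varphi(s_0)\leq \lambda$ for some constant $\lambda\in(0,1)$; for concreteness take $\lambda=1/2$. Iterating property (2) on the integer multiples of $s_0$ then gives
\[
\varphi(ns_0)\leq \varphi(s_0)^n \leq \lambda^n \qquad \text{for every positive integer } n.
\]
This is the engine of the decay: even a single value of $\varphi$ strictly less than $1$ propagates to exponential decay along an arithmetic progression.

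Next, for an arbitrary $s>0$, I would perform the Euclidean division $s=ns_0+r$ with $n\in\{0,1,2,\dots\}$ and $r\in[0,s_0)$. When $r>0$ and $n\geq 1$, submultiplicativity together with the uniform bound from (1) gives
\[
\varphi(s)\leq \varphi(ns_0)\varphi(r)\leq \lambda^n\cdot B.
\]
The two edge cases cause no trouble: if $n=0$ then $s=r\in(0,s_0)$ and $\varphi(s)\leq B$ directly from (1); if $r=0$ then $s=ns_0$ and $\varphi(s)\leq \lambda^n$ from the previous paragraph.

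Finally, I would convert the estimate $\lambda^n B$ into an exponential in $s$ by setting $c=-(\log\lambda)/s_0>0$, so that $\lambda^n=e^{-cns_0}=e^{-cs}e^{cr}\leq e^{-cs}e^{cs_0}=e^{-cs}/\lambda$. Thus
\[
\varphi(s)\leq \frac{B}{\lambda}\,e^{-cs},
\]
and choosing $a=B/\lambda$ (or, to also cover the case $n=0$ cleanly, $a=\max\{B/\lambda,\,Be^{cs_0}\}=B/\lambda$) yields the desired inequality $\varphi(s)\leq a e^{-cs}$ for all $s\geq 0$. There is no real obstacle here; the only minor subtlety is bookkeeping the residue $r$ so that the constant $a$ absorbs $e^{cr}$ uniformly, which the bound $r<s_0$ makes automatic.
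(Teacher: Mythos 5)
Your proof is correct, but it uses a different decomposition than the paper's. You perform Euclidean division $s=ns_0+r$ with $r\in[0,s_0)$ and bound $\vp(s)\le \vp(ns_0)\vp(r)\le \lambda^n B$, which requires the uniform bound $B$ from property~(1) to control the remainder factor $\vp(r)$. The paper instead picks an integer $n$ so that $s_0\le s/n\le 2s_0$ (possible because either $n=1$ works when $s_0\le s<2s_0$, or the interval $[s/(2s_0),\,s/s_0]$ has length $\ge 1$ when $s\ge 2s_0$), sets $s^*=s/n$, and estimates $\log\vp(s)/s\le \log\vp(s^*)/s^*\le -(\log 2)/(2s_0)$ directly; this gives $\vp(s)\le e^{-cs}$ with $a=1$ on $[s_0,\infty)$ and never touches $B$ there, using property~(1) only to absorb the interval $[0,s_0]$ into the constant $a$. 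Both arguments are instances of the standard Fekete-type submultiplicativity trick and are equally valid; your version is the more classical phrasing, the paper's is slightly leaner in its use of the uniform bound. One small bookkeeping point you should make explicit: in the case $r=0$ your estimate is $\vp(s)\le e^{-cs}$, so the final $a$ must be at least $1$, which is ensured by taking $a=\max\{B/\lambda,1\}$ (or by noting that one may assume $B\ge 1$ without loss of generality).
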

\begin{proof}
	Notice that the second property implies that if $s\geq 0$ is any number and $n>0$ is any integer, then $\vp(ns)\leq \vp(s)^n$. The third property tells us we can choose $s_0 > 0$ so that $\vp(s) < 1/2$ for $s\geq s_0$. Thus it suffices to show that $\vp(s)\leq e^{-cs}$ for $s\geq s_0$, where $c = \f 12 (\log 2)/s_0$. Given $s\geq s_0$, we may choose an integer $n\geq 1$ such that $s_0\leq s/n\leq 2s_0$. Let $s^* = s/n$. Then
\[
	\f {\log \vp(s)}s = \f{\log\vp(ns^*)}{ns^*} \leq \f{\log\vp(s^*)^n}{ns^*} = \f{\log\vp(s^*)}{s^*} \leq -\f{\log 2}{2s_0} = -c \; .
\]
\end{proof}

\begin{proof}[Proof of (1) $\iff$ (4).]
	We start with the reverse direction. Proposition \ref{P3.7}, Lemma \ref{L3.11}, and Lemma \ref{L3.12} together imply that for every $v\in SM$ and every $\xi\in X_s(v)$, we have
\[
	\magn{dg^t \xi} \leq a\magn\xi e^{-ct}
\]
	for $t\geq 0$, where $a>0$ and $c>0$ do not depend on $\xi$ or $v$. By replacing $\xi$ with $dg^{-t}\xi$, we have that
\[
	\magn{dg^{-t}\xi} \geq (1/a) \magn{\xi}e^{ct}
\]
	for $t \geq 0$. Now let $\eta \in X_u(v)$, so $\eta = dS(\xi)$ for $\xi\in X_s(-v)$. Since the first inequality is independent of $v$, we have
\[
	\magn{dg^t \eta} = \magn{dg^t \circ dS (\xi)} = \magn{dS\circ dg^{-t} \xi} = \magn{dg^{-t}\xi} \geq (1/a) \magn{\xi}e^{ct} = (1/a) \magn{\eta} e^{ct}
\]
	for $t \geq 0$. Since $(1/a) \magn{\eta}e^{ct} > a\magn\eta e^{-ct}$ for sufficiently large $t$, we have $\eta \notin X_s(v)$ for any $\eta\in X_u(v)$, so $X_s(v) \cap X_u(v) = \set 0$, and by using (2) $\Longrightarrow$ (3) we have that
\[
	T_v(SM) = X_s(v) \oplus X_u(v) \oplus Z(v) \; .
\]
	Setting $X_s^*(v) = X_s(v)$ and $X_u^*(v) = X_u(v)$, the conditions for an Anosov flow are satisfied.
	
	Next, we show the forward direction. Suppose toward contradiction that there exists a nonzero perpendicular Jacobi field $J$ on a unit speed geodesic such that $\magn{J(t)}\leq c$ for some $c>0$ and all $t\in\bb R$. If $v = \g'(0)$, then let $\xi\in T_v(SM)$ be such that $J = J_\xi$. Choose $k > 0$ such that $\k > -k^2$. Since $J$ is perpendicular, $\brak{\xi, Y(v)} = 0$, and $\xi \in X_s(v) \cap X_u(v)$ by Proposition \ref{P2.12}. For any $t\in \bb R$, we have $dg^t\xi \in X_s(g^t v) \cap X_u(g^t v)$ and
\[
	\magn{K\circ dg^t\xi} \leq k\magn{d\pi\circ dg^t\xi} \leq kc
\]
	by Proposition \ref{P2.11}. Thus by definition of the Sasaki metric, $\magn{dg^t\xi} \leq c(1+k^2)^{1/2}$ for all $t$. By definition of an Anosov flow, we have $\xi = \xi_1 + \xi_2 + \xi_3$, where $\xi_1\in X_s^*(v)$, $\xi_2\in X_u^*(v)$, and $\xi_3 = aY(v)$ for some $a\in\bb R$. Notice that $\magn{dg^t\xi_3} = \magn{\xi_3}$ for all $t$, since $dg^tY(w) = Y(g^tw)$ and $\magn{Y(w)} = 1$ for $w\in SM$. By the properties of the spaces $X_s^*(v)$ and $X_u^*(v)$, it follows that if $\xi_1\neq 0$, then $\magn{dg^t\xi} \to \infty$ as $t\to-\infty$, while if $\xi_2\neq 0$, then $\magn{dg^t\xi}\to\infty$ as $t\to\infty$. Thus $\xi_1 = \xi_2 = 0$, which contradicts the assumption that $\xi$ is nonzero and orthogonal to $Y(v)$.
\end{proof}

For the rest of the proof, we assume that $M$ has no focal points.
\begin{proof}[Proof of (4) $\iff$ (5).]
	We start with the forward direction and show the contrapositive. Suppose there exists a nonzero perpendicular parallel Jacobi field $J$ on a unit speed geodesic $\g$. Since $J'(t) = 0$, we have $\magn{J(t)} > 0$ is constant over all $t$. Then $J$ is a nonzero perpendicular Jacobi field whose magnitude is uniformly bounded for all $t$.
	
	Before we show the reverse direction, we need to make some observations. First, we claim that if a Jacobi field $J$ is in $J_s(\g)$ (resp. $J_u(\g)$), then $\magn{J(t)}$ is nonincreasing (resp. nondecreasing) in $t$. For let $J\in J_s(\g)$ and $a < b$ be given. By Remark \ref{RConst}, the condition of Proposition \ref{P2.13} is satisfied relative to the constants $A=1$ and $s_0=0$. Thus we have $\magn{J(t)} \leq \magn{J(0)}$ for $t\geq 0$ by Proposition \ref{P2.13}. For each $u\in\bb R$, let $\g^*(u) = \g(a+u)$ and $J^*(u) = J(a + u)$, effectively shifting the parameter of our geodesic and Jacobi field by $a$. Since $\magn{J^*(u)}\leq \magn{J(0)}$ for $u\geq -a$, Proposition \ref{P2.12} shows that $J^* \in J_s(\g^*)$. Then again using Proposition \ref{P2.13}, this time on $J^*$, we have that $\magn{J^*(t)}\leq \magn{J^*(0)}$ for $t\geq 0$. This yields
\[
	\magn{J(b)} = \magn{J^*(b-a)} \leq \magn{J^*(0)} = \magn{J(a)} \; ,
\]
	as desired. A similar argument shows the result for $J\in J_u (\g)$.
	
	We now prove the reverse direction by showing the contrapositive. Suppose that $J$ is a nonzero perpendicular Jacobi field on a unit speed geodesic $\g$ such that $\magn{J(t)}$ is uniformly bounded for all $t\in\bb R$. It follows from Proposition \ref{P2.12} that $J\in J_s(\g)\cap J_u(\g)$. The claim above shows that $\magn{J(t)}$ is both nonincreasing and nondecreasing in $t$, and hence constant. Since $J$ perpendicular, relative an adapted frame field $E_1, E_2$ along $\g$ with $E_2(s) = \g'(s)$, we have
\[
	J(t) = f(t) E_1(t)
\]
	for some real-valued function $f$, which then implies $f(t)$ is constant. Since $J'(t) = f'(t) E_1(t) \equiv 0$, we conclude $J$ is parallel.
\end{proof}

\begin{proof}[Proof of (1) $\iff$ (6)]
	For the forward direction, we show the contrapositive. Let $\g$ be a unit speed geodesic such that $\k(t) \geq 0$ along $\g$ for all $t$. Relative to an adapted frame field along $\g$, it suffices to show that $E_1$ is a Jacobi field, for then $E_1$ is a nonzero perpendicular parallel Jacobi field along $\g$, which by (1) $\Longrightarrow$ (5) implies the geodesic flow on $SM$ is not Anosov.
	
	We first show that $\k(t) \equiv 0$ on $\g$. Recall that $u(s) = d'(s)d^{-1}(s)$ is a solution of the Ricatti equation
\[
	u'(s) + u(s)^2 + \k(s) = 0
\]
	along $\g$. Since the Gaussian curvature is nonnegative, given any $\e > 0$, $\k(s) > -\e^2$ for all $s\in\bb R$. Since $u(s)$ is defined everywhere, we have $\abs{u(s)}\leq \e$ by Lemma \ref{Green21}, so $u(s) \equiv 0$, which then gives us $\k(s) \equiv 0$ using the Ricatti equation. This in turn implies that the curvature tensor is identically zero along $\g$, so the Jacobi equation along $\g$ simplifies to
\[
	J''(s) = 0 \; .
\]
	Since $E_1(s)$ is parallel, $E_1''(s) \equiv 0$, so $E_1$ is a Jacobi field, as desired.
	
	For the reverse direction, suppose every geodesic in $M$ passes through a point of negative Gaussian curvature. Assume toward contradiction that the geodesic flow on $SM$ is not of Anosov type, so (5) $\Longrightarrow$ (1) implies there exists a nonzero perpendicular parallel Jacobi field $J$ along $\g$. Relative to an adapted frame field along $\g$ and using the fact that $J \in J_s(\g)$, we have by Remark \ref{HaoJacobi} that $J(s) = cd(s)E_1(s)$ for some $c \neq 0$. Without loss of generality, we may assume $c = 1$. Since $J$ is parallel, $d(s)$ is constant, so $d'(s) \equiv 0$ along $\g$. Plugging $u(s) = d'(s) d^{-1}(s) \equiv 0$ into the Ricatti equation shows that $\k(s) \equiv 0$, a contradiction.
\end{proof}

\bibliographystyle{alpha}
\bibliography{paper.bib}

\end{document}